\documentclass{amsart}
\usepackage{graphicx}

\theoremstyle{plain}
\newtheorem{theorem}{Theorem}[section]
\newtheorem{lemma}[theorem]{Lemma}
\newtheorem{corollary}[theorem]{Corollary}

\theoremstyle{definition}
\newtheorem{definition}[theorem]{Definition}

\theoremstyle{remark}

\newcommand{\del}{\partial}

\newcommand{\R}{\mathbb{R}}

\newcommand{\Z}{\mathbb{Z}}

\newcommand{\RP}{\mathbb{RP}}

\begin{document}

\title{Essential surfaces in Seifert fiber spaces with singular surfaces}

\author[Kalelkar]{Tejas Kalelkar}
\address{Mathematics Department, Indian Institute of Science Education and Research, Pune 411008, India}
\email{tejas@iiserpune.ac.in}

\author[Nair]{Ramya Nair}
\address{Mathematics Department, Indian Institute of Science Education and Research, Pune 411008, India}
\email{nair.ramya@students.iiserpune.ac.in}

\date{\today}

\keywords{Seifert fiber space, Essential surfaces}

\begin{abstract}
Two-sided incompressible surfaces in Seifert fiber spaces with isolated singular fibers are well-understood. Frohman \cite{Fro} and Rannard \cite{Ran} have shown that one-sided incompressible surfaces in Seifert fiber spaces which have isolated singular fibers are either pseudo-horizontal or psuedo-vertical. We extend their result to characterise essential surfaces in Seifert fiber spaces which may have singular surfaces, i.e., in $S^1$-foliated $3$-manifolds which have fibered model neighbourhoods that are isomorphic to either a fibered solid torus or a fibered solid Klein bottle. 
\end{abstract}

\maketitle

\section{Introduction}
We call a closed $3$-manifold $M$ \emph{irreducible} if every embedded $2$-sphere in $M$ bounds a $3$-ball. The prime decomposition phenomenon of $3$-manifolds allows us to uniquely express every closed $3$-manifold as a connected sum of manifolds which are either irreducible or $S^2 \times S^1$ or $S^2 \tilde{\times} S^1$. The geometrisation of $3$-manifolds further allows us to cut these irreducible summands along a canonical collection of incompressible tori and Klein bottle into pieces which are one of three possible types: either they are Seifert fiber spaces or they are finitely covered by torus bundles or they have interiors which admit a complete hyperbolic metric. Seifert fiber spaces are precisely those compact $3$-manifolds which admit a foliation by circles. They are an important class of $3$-manifolds that are fairly well-understood as they are completely determined by a finite collection of integers. 

Let $D=\{(x, y) \in \R^2 : x^2 + y^2 \leq 1\}$ and let $D^+ = \{(x, y) \in D: x\geq 0\}$. Let $I=[0,1]$, let $\del I = \{0, 1\}$ and let $int(I) = (0,1)$. A \emph{model fibered solid torus} is the monodromy fibering of a solid torus given by $D \times I/\sim_\rho$ or $D^+ \times I/\sim_\rho$. Where $\rho: D \times 1 \to D \times 0$ is a rational rotation map (possibly identity) and $\rho: D^+ \times 1 \to D^+ \times 0$ is the identity map. We call the model fibered solid torus \emph{regular} if $\rho$ is the identity and \emph{non-regular} otherwise. Similarly, a \emph{model fibered Klein bottle} is the monodromy fibering given by $D \times I/\sim_r$ or $D^+ \times I/\sim_r$ where $r: D \times 1 \to D\times 0$ or $r: D^+ \times 1 \to D^+ \times 0$ is the reflection along the $x$-axis. 

Epstein \cite{Eps} has shown that every circle fiber $f$ in a Seifert fiber space has a fibered neighbourhood isomorphic to a model fibered solid torus or a model fibered Klein bottle with $f$ identified with the fiber $0 \times S^1$ in the model. We call $f$ \emph{regular} if it has a fibered neighbourhood that is isomorphic to a regular model fibered solid torus and \emph{singular} otherwise. An isolated singular fiber has a fibered neighbourhood isomorphic to a non-regular model solid torus while the non-isolated singular fibers have fibered neighbourhoods isomorphic to a model solid Klein bottle. The union of the non-isolated singular fibers give a collection of annuli, tori and Klein bottles that we call \emph{singular surfaces}. Let $N$ denote a Mobius strip. The model fibered neighbourhood of a singular surface $C$ is either a solid Klein bottle $N \times I$ (when $C$ is an annulus), $N \times S^1$ (when $C$ is a torus) or $N \tilde{\times} S^1$ (when $C$ is a Klein bottle). A good exposition for Seifert fiber spaces is the survey paper by Scott \cite{Sco} and the preprint of a book by Hatcher \cite{Hat}. A good account of the non-orientable Seifert fibered spaces with singular surfaces is given by Cattabriga et al \cite{CMMN}.
\newline

Let $M$ denote a Seifert fiber space possibly with singular surfaces. Let $S$ denote a properly embedded surface in $M$. We call an embedded disk $D$ in $M$ a \emph{compressing disk of $S$} if $D \cap S = \del D$ which is an essential curve in $S$. We call an embedded disk $D$ in $M$ a \emph{boundary compressing disk of $S$} if $\del D$ is the union of arcs $\alpha$ and $\beta$ where $\alpha = D \cap S$ is an arc in $S$ that is not boundary-parallel and $\beta = D \cap \del M$. For $E \subset \del M$, we say that $S$ is \emph{boundary-compressible with respect to $E$} if $\beta$ lies in $E$. We say that $S$ is \emph{incompressible} or \emph{boundary-incompressible} if it does not have any compressing disks or  boundary-compressing disks respectively. We say that $S$ is \emph{essential} if it is neither $S^2$ nor a boundary-parallel disk and it is both incompressible and boundary-incompressible in $M$. We now define the notions of horizontal, vertical, pseudo-horizontal and pseudo-vertical below.
\begin{definition}
Let $C$ be a fibered annulus, Mobius strip, torus or Klein bottle. We say that a properly embedded arc or curve in $C$ is \emph{horizontal} if it is transverse to the fibration. We say that a curve in $C$ is \emph{vertical} if it is a fiber of the fibration.

We say that a properly embedded surface $S$ in $M$ is \emph{horizontal} if it is transverse to the fibration of $M$ and we call it \emph{vertical} if it is composed of a union of the fibers of $M$.
\end{definition}

\begin{definition}
Let $S$ be a properly embedded surface in a Seifert fiber space $M$. Let $C$ be an isolated singular fiber or a singular surface of $M$ and let $W$ be a model neighbourhood of $C$. We say that the intersection of $S$ with $W$ is \emph{exceptional} if  $S \cap W$ is one of the following:
\begin{enumerate}
\item{A once-punctured non-orientable surface with horizontal boundary in $\del W$ when $C$ is an isolated fiber.}
\item{A pair of pants with vertical boundary in $\del W$ when $C$ is an annulus.}
\item{A once-punctured torus with vertical boundary in $\del W$ when $C$ is a torus.}
\item{A once-punctured Klein bottle with vertical boundary in $\del W$ when $C$ is a Klein bottle.}
\end{enumerate}
\end{definition}

\begin{definition}
Let $S$ be a properly embedded surface in a Seifert fiber space $M$. We say that $S$ is \emph{pseudo-horizontal} if it is a horizontal surface outside model neighbourhoods of the isolated singular fibers and intersects the model neighbourhoods of the isolated singular fibers horizontally or exceptionally. 

We say that $S$ is \emph{pseudo-vertical} if it is a vertical surface outside model neighbourhoods of the singular surfaces and isolated singular fibers and it intersects the model neighbourhoods of the singular surfaces and isolated singular fibers vertically or exceptionally.
\end{definition}

It is well-known that in Seifert fiber spaces without singular surfaces, any two-sided essential surface can be isotoped to become vertical or horizontal (see Hatcher \cite{Hat}). Frohman \cite{Fro} showed that a closed one-sided incompressible surface in an orientable Seifert fiber space with orientable base can be isotoped to become pseudo-horizontal or pseudo-vertical. Rannard \cite{Ran} extended this result to closed incompressible surfaces in non-orientable Seifert fiber spaces without singular surfaces. In this article, we extend Rannard's proof to Seifert fiber spaces which may have boundary and singular surfaces by listing out the incompressible surfaces in a solid Klein bottle. The main theorem we prove in this article is the following:
\begin{theorem}\label{thm: main theorem}
Let $M$ be a Seifert fiber space (possibly with singular surfaces) which has at least one singular fiber and let $S$ be a connected properly embedded essential surface in $M$. Then $S$ can be isotoped to a surface that is pseudo-horizontal or pseudo-vertical. 
\end{theorem}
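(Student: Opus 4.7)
The plan is to reduce to the main theorem of Rannard on the complement of model neighbourhoods of the isolated singular fibers and singular surfaces, and then analyse how $S$ meets each such neighbourhood.

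Let $\{W_i\}$ be a disjoint collection of model fibered neighbourhoods of the isolated singular fibers and singular surfaces of $M$. First I would put $S$ in general position with respect to $\bigcup_i \del W_i$ and, among all isotopy representatives, choose one that minimises the number of components of $S \cap \bigcup_i \del W_i$. Since $S$ is incompressible and boundary-incompressible in $M$ and each component of $\del W_i$ is an incompressible torus or Klein bottle in $M$, a standard innermost-disk / outermost-bigon argument forces every curve of $S \cap \del W_i$ to be essential in $\del W_i$ and forces the pieces $S \cap W_i$ and $S \cap (M \setminus \bigcup_i W_i)$ to be incompressible and $\del$-incompressible in the respective ambient pieces.

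Next I would classify each piece $S \cap W_i$. When $W_i$ is a model fibered solid torus this is the analysis carried out by Frohman: the piece is either a meridian disk, a vertical annulus, or a once-punctured non-orientable exceptional surface at a non-regular fiber. When $W_i$ is a fibered solid Klein bottle (the model neighbourhood of an annular, toroidal or Klein-bottle singular surface) the needed classification of incompressible and $\del$-incompressible surfaces is the new input advertised in the introduction; the expected conclusion is that the piece is vertical, horizontal, or of one of the three exceptional types (pair of pants, once-punctured torus, once-punctured Klein bottle) with vertical boundary. The crucial consequence is that on each component of $\bigcup_i \del W_i$ the boundary curves of $S$ have a single slope, and that slope is either the fiber slope (vertical) or transverse to every fiber (horizontal).

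Now set $M' = M \setminus \bigcup_i \mathrm{int}(W_i)$. Since the singular fibers and singular surfaces of $M$ have been excised, $M'$ is a Seifert fiber space without singular fibers, i.e.\ an $S^1$-bundle over a surface with boundary, and $S' = S \cap M'$ is an essential properly embedded surface in $M'$ whose boundary slopes on each component of $\del W_i \subset \del M'$ are uniform. The classical theorem for essential surfaces in $S^1$-bundles (Hatcher, in the non-orientable setting Rannard) then lets me isotope $S'$ rel $\del M'$ to be either horizontal or vertical, and this choice is forced by the slopes dictated by the pieces $S \cap W_i$. Reassembling, $S$ becomes pseudo-horizontal in the first case and pseudo-vertical in the second; the hypothesis that $M$ has at least one singular fiber is used precisely to exclude the degenerate situation in which $S'$ could be both horizontal and vertical, so the dichotomy is well-defined.

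The main obstacle I anticipate is the classification step inside a fibered solid Klein bottle, together with the compatibility of the isotopies across $\del W_i$: when a neighbouring $W_i$ has reflection monodromy, the vertical and horizontal local models must be glued consistently with the pattern of intersection curves produced on the outside, which is not automatic because vertical isotopies in a Mobius band interact non-trivially with the reflection. Everything else should reduce to the now-standard Frohman--Rannard cut-and-paste bookkeeping.
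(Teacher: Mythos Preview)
Your overall architecture is reasonable but differs from the paper's, and there is a genuine gap in the step where you invoke Rannard on the complement.

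\textbf{Difference in strategy.} The paper does not cut out only the singular neighbourhoods and apply Rannard as a black box on $M'$. Instead it chooses a fine CW structure on the base so that the whole of $M$ is partitioned into solid tori and solid Klein bottles $F\in\mathcal F$, isotopes $S$ to be \emph{well-embedded} with respect to the vertical annuli and M\"obius strips $E\in\mathcal E$ (their Lemma~\ref{lem: well-embedded}), and then classifies $S\cap F$ cell by cell using Theorems~\ref{thm: incompressible in solid torus} and~\ref{thm: incompressible in solid Klein bottle}. The global ``horizontal versus vertical'' dichotomy is then obtained by a propagation argument through adjacent cells, not by citing a theorem on $M'$.

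\textbf{The gap.} Your sentence ``the classical theorem for essential surfaces in $S^1$-bundles \ldots\ lets me isotope $S'$ rel $\del M'$ to be either horizontal or vertical'' is where the argument breaks. Rannard's theorem for an $S^1$-bundle does \emph{not} say that a one-sided essential surface is horizontal or vertical: it says the surface is vertical, or horizontal outside the model neighbourhood of \emph{one regular fiber} and horizontal or exceptional inside that neighbourhood (this is exactly the remark following Theorem~\ref{thm: main theorem} in the paper). So after your black-box step, $S'$ may carry a once-punctured non-orientable piece sitting over a \emph{regular} fiber of $M'$, and reassembling with the $W_i$ does not produce a pseudo-horizontal surface in the sense of the paper's definition. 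You have to move that exceptional piece into one of the $W_i$, and this is real work: the paper does it with Lemma~\ref{lem: reduce to meridian disks}, which uses repeated boundary compressions along a chosen $E\in\mathcal E$ to slide the non-orientable piece through a path of regular solid tori into a designated $F_0$ that is either a non-regular solid torus or is adjacent to $M_1$. Without an analogue of this step your proof is incomplete. A secondary issue is that neither Hatcher's nor Rannard's isotopy is rel boundary, so ``rel $\del M'$'' is not free; the paper handles this by controlling $S\cap E$ for all $E\in\mathcal E$ simultaneously via the well-embedded condition before any cell-by-cell analysis. You correctly identified the classification inside a fibered solid Klein bottle (Theorem~\ref{thm: incompressible in solid Klein bottle}) as the main new ingredient, and your anticipated obstacle about gluing across reflection monodromy is real and is dealt with in Corollary~\ref{cor: incompressible in Mobius fibered}.
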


If $M$ has no singular fibers, i.e., $M$ is an $S^1$-bundle over a surface then Rannard \cite{Ran} has shown that $S$ can be isotoped either to a vertical surface or to a surface that is horizontal outside the model neighbourhood of one regular fiber and intersects the model neighbourhood horizontally or exceptionally. Note that if $M$ has singular surfaces, then it always has a horizontal surface (Theorem 1.2 of \cite{KalNai}).

\section{Properties of Seifert fiber spaces}
In this section, we prove that most Seifert fibered spaces are irreducible and have incompressible boundary and we fix a partition of the manifold into solid tori and solid Klein bottle. 

\begin{figure}
\centering
\def\svgwidth{0.2\columnwidth}
\begingroup%
  \makeatletter%
  \providecommand\color[2][]{%
    \errmessage{(Inkscape) Color is used for the text in Inkscape, but the package 'color.sty' is not loaded}%
    \renewcommand\color[2][]{}%
  }%
  \providecommand\transparent[1]{%
    \errmessage{(Inkscape) Transparency is used (non-zero) for the text in Inkscape, but the package 'transparent.sty' is not loaded}%
    \renewcommand\transparent[1]{}%
  }%
  \providecommand\rotatebox[2]{#2}%
  \newcommand*\fsize{\dimexpr\f@size pt\relax}%
  \newcommand*\lineheight[1]{\fontsize{\fsize}{#1\fsize}\selectfont}%
  \ifx\svgwidth\undefined%
    \setlength{\unitlength}{187.93487767bp}%
    \ifx\svgscale\undefined%
      \relax%
    \else%
      \setlength{\unitlength}{\unitlength * \real{\svgscale}}%
    \fi%
  \else%
    \setlength{\unitlength}{\svgwidth}%
  \fi%
  \global\let\svgwidth\undefined%
  \global\let\svgscale\undefined%
  \makeatother%
  \begin{picture}(1,1.29712626)%
    \lineheight{1}%
    \setlength\tabcolsep{0pt}%
    \put(0,0){\includegraphics[width=\unitlength,page=1]{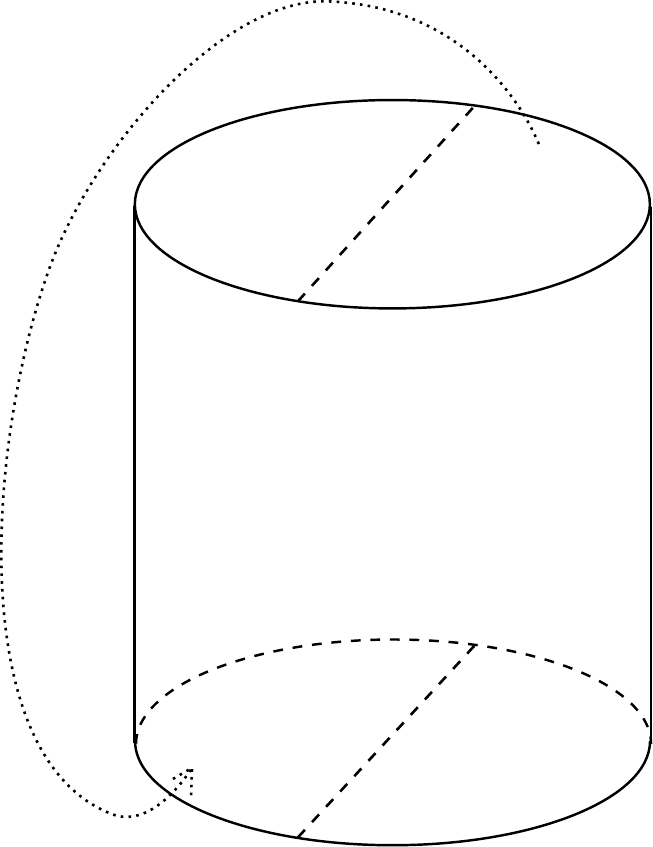}}%
    \put(0.50692463,0.20346287){\makebox(0,0)[lt]{\lineheight{1.25}\smash{\begin{tabular}[t]{l}$m$\end{tabular}}}}%
    \put(0,0){\includegraphics[width=\unitlength,page=2]{CurvesKB.pdf}}%
    \put(0.35881359,0.54157443){\makebox(0,0)[lt]{\lineheight{1.25}\smash{\begin{tabular}[t]{l}$l_1$\end{tabular}}}}%
    \put(0.61380906,0.63321416){\makebox(0,0)[lt]{\lineheight{1.25}\smash{\begin{tabular}[t]{l}$l_2$\end{tabular}}}}%
    \put(0,0){\includegraphics[width=\unitlength,page=3]{CurvesKB.pdf}}%
    \put(0.80747405,0.64537778){\makebox(0,0)[lt]{\lineheight{1.25}\smash{\begin{tabular}[t]{l}$d$\end{tabular}}}}%
    \put(0.24827012,0.65224874){\makebox(0,0)[lt]{\lineheight{1.25}\smash{\begin{tabular}[t]{l}$d$\end{tabular}}}}%
  \end{picture}%
\endgroup%

\caption{The four non-trivial closed curves on a Klein bottle which is obtained from $S^1 \times I$ by identifying $S^1 \times 1$ with $S^1 \times 0$ via a reflection map.}\label{fig: CurvesKB}
\end{figure}

We will reserve the letter $N$ to denote a fibered Mobius strip with monodromy fibering given by $[-1, 1] \times [-1, 1]/ (x, 1) \sim (-x, -1)$. The solid Klein bottle $N \times I$ has an induced fibering which is the same as that of the model fibered solid Klein bottle. The boundary Klein bottle has a fibering given by the cores of the two Mobius strips $N \times 0$ and $N\times 1$ that we denote by $l_1$ and $l_2$ and fibers parallel to  $\del N \times t$ that we denote by $d$. We denote by $m$ the boundary of a meridian disk of the solid Klein bottle (see Figure \ref{fig: CurvesKB}).

Let $M$ be a Seifert fiber space with base space $B$ and projection map $p: M \to B$. Let $M_1$ be the union of disjoint model neighbourhoods of its singular surfaces, i.e., $M_1$ is a (possibly empty) union of $N \times I$, $N \times S^1$ and $N \tilde{\times} S^1$ components. Let $B_1$ denote the projection of $M_1$ on $B$. Let $B_0= \overline{B \setminus B_1}$ denote the base space of the Seifert fiber space $M_0=\overline{M \setminus M_1}$ which has only isolated singular fibers. 

\begin{lemma}\label{lem: boundary-incompressible}
Let $M$ be an irreducible Seifert fiber space with compressible boundary. Then $M$ is a solid torus or a solid Klein bottle.
\end{lemma}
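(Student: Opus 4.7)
\emph{Plan.} Let $D$ be a compressing disk for $\partial M$. Its boundary $\partial D$ lies in a component $T$ of $\partial M$, and since $\partial M$ is a saturated subsurface, $T$ is a torus or Klein bottle carrying an induced circle fibration. I want to use $D$ to exhibit $M$ itself as one of the model pieces built out of $N\times I$ or a solid torus $D^2\times S^1$.

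First I would put $D$ in general position with respect to the decomposition $M=M_0\cup M_1$ fixed in the section, so that $D$ meets the saturated tori and Klein bottles $\partial M_0\cap\partial M_1$, together with the isolated singular fibers, transversely. The intersection $D\cap(\partial M_0\cap\partial M_1)$ is a finite collection of circles and arcs. Using irreducibility of $M$ and an innermost-circle argument on $D$ (capping off by disks from the known list of incompressible surfaces in a solid torus and solid Klein bottle that the paper will establish, or simply by observing that saturated tori/Klein bottles in $M_0\cup M_1$ are incompressible in the complementary pieces unless those pieces are already trivial), I would remove all circle intersections. An outermost-arc argument then reduces the arc intersections to zero or places the outermost arc inside a solid Klein bottle component of $M_1$. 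After this simplification I can assume $D$ lies entirely in either $M_0$ or a single component of $M_1$.

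If $D$ lies in $M_0$, the classical theorem of Waldhausen/Hatcher for Seifert fiber spaces with only isolated singular fibers applies: the component of $M_0$ containing $D$ is then a solid torus. I would then show that any further piece of $M_1$ glued across a saturated annulus in this solid torus would either contradict irreducibility of $M$ or cancel with the compression; hence $M_1=\emptyset$ and $M$ is the solid torus. If instead $D$ lies in a component $W$ of $M_1$ of the form $N\times I$, $N\times S^1$, or $N\tilde\times S^1$, then I would use the classification of essential curves on the boundary Klein bottle (the curves $l_1,l_2,d,m$ pictured in Figure~\ref{fig: CurvesKB}) to identify $\partial D$ as the meridian $m$, the unique essential curve bounding a disk in $W$; this forces $W\cong N\times I$ and, by the same attaching analysis as above, $M=W$ is a solid Klein bottle.

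The main obstacle will be the bookkeeping at the Klein bottle boundary: one must verify that the only essential curves on $\partial(N\times I)$, $\partial(N\times S^1)$, $\partial(N\tilde\times S^1)$ that can bound embedded disks inside the respective pieces are (isotopic to) the meridian $m$, ruling out $l_1,l_2,d$ on homological grounds, and must confirm that the outermost-arc reduction across these Klein bottle boundaries does not create new compressions that miss the solid-torus/solid-Klein-bottle conclusion. Once the arc reduction is in hand, passing between the two cases and gluing back is essentially forced by irreducibility and connectedness.
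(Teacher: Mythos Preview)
Your plan could in principle be pushed through, but it takes a far longer and more fragile route than the paper. The paper's proof is a four--line elementary $3$--manifold argument that uses no Seifert structure whatsoever: let $D$ be a compressing disk for a boundary component $T$ (necessarily a torus or Klein bottle), take a product neighbourhood $N(D)\cong D\times I$, and observe that $(T\setminus(\partial D\times I))\cup(D\times\partial I)$ is a sphere. Irreducibility makes this sphere bound a ball disjoint from $D\times\operatorname{int}(I)$, so $M$ is two balls glued along a pair of boundary disks, i.e.\ a solid torus or solid Klein bottle. That is the entire argument.

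By contrast, you bring in the decomposition $M=M_0\cup M_1$, innermost--circle and outermost--arc reductions across $P=\partial M_0\cap\partial M_1$, the Waldhausen--Hatcher theorem for $M_0$, the curve classification on the Klein bottle, and a gluing analysis. Beyond the extra work, there is a real circularity hazard: Lemma~\ref{lem: boundary-incompressible} feeds into Theorem~\ref{thm: irreducible}, and the standing hypothesis ``$M$ is a solid torus, a solid Klein bottle, or irreducible with incompressible boundary'' that underlies all of Section~3 (including the classification results you want to quote) rests on it. Also note that after your reduction $D$ can only land in a piece meeting $\partial M$, so the $N\times S^1$ and $N\tilde{\times}S^1$ cases you list never arise (and $N\times S^1$ has torus, not Klein bottle, boundary). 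The moral is that this lemma is really a general statement about irreducible $3$--manifolds with a compressible torus or Klein bottle boundary component; the direct surgery argument is both shorter and logically prior to everything Seifert--specific in the paper.
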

\begin{proof}
Assume that $M$ has a compressible torus or Klein bottle boundary component $T$. Let $D$ be a compressing disk for $T$. Let $N(D) \simeq D \times I$ be a regular neighbourhood of $D$ in $M$. Then $(T\setminus (\del D \times I)) \cup (D \times \del I)$ is an embedded sphere in $M$. As $M$ is given to be irreducible, so this sphere bounds a ball that does not contain $D \times (int (I))$. Hence, $M$ is obtained from two balls by identifying a pair of disks on their boundaries, so $M$ is either a solid torus or a solid Klein bottle.
\end{proof}

The following Theorem \ref{thm: irreducible} is known for Seifert fibered spaces with isolated singular fibers (see Proposition 1.12 of Hatcher \cite{Hat}). We extend this result to Seifert fibered spaces which may have singular surfaces.
\begin{theorem}\label{thm: irreducible}
Let $M$ be a Seifert fibered space (possibly with singular surfaces). Then $M$ is irreducible unless it is $S^2 \times S^1$, $S^2 \tilde{\times} S^1$ or $\RP^3 \# \RP^3$.
\end{theorem}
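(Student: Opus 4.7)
The plan is to reduce to Hatcher's Proposition~1.12 (the case of only isolated singular fibers) via the decomposition $M=M_0\cup M_1$ introduced above. I would take a putative reducing sphere $S\subset M$ and isotope it to minimise $|S\cap \del M_1|$, then work through cases based on this intersection.

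First, I would record that each component of $M_1$---one of $N\times I$, $N\times S^1$, or $N\tilde\times S^1$---is irreducible: each is aspherical ($N\times I$ deformation retracts to a Mobius band, and the other two are circle bundles over $N$ whose universal cover is $\R^3$), so by the sphere theorem they contain no fake sphere. A $\pi_1$-computation further shows that $\del(N\times S^1)$ and $\del(N\tilde\times S^1)$ are incompressible in the respective pieces, while $\del(N\times I)$ admits the meridional compressing disk bounded by the curve $m$ of Figure~\ref{fig: CurvesKB}.

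For the empty-intersection case, either $S$ lies in a component of $M_1$---bounding a ball by the previous paragraph---or $S\subset M_0$. In the second situation I would Dehn-fill the torus and Klein-bottle components of $\del M_0\cap\del M_1$ by model fibered solid tori and solid Klein bottles chosen so the fibration extends, obtaining a closed Seifert fiber space $\widehat{M_0}$ with only isolated singular fibers, and apply Proposition~1.12 of~\cite{Hat}. If $\widehat{M_0}$ is irreducible then $S$ bounds a ball $B$ in $\widehat{M_0}$, and an innermost disk argument on the meridian disks of each fill shows $B$ meets no fill and hence lies in $M_0\subset M$; otherwise $\widehat{M_0}\in\{S^2\times S^1,\,S^2\tilde\times S^1,\,\RP^3\#\RP^3\}$ and a direct identification shows $M$ must itself be on this list. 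For the nonempty-intersection case, the standard innermost disk argument either reduces $|S\cap\del M_1|$ (when $\del D$ is inessential on $\del M_1$, using irreducibility of the complementary piece) or, if $\del D$ is essential, forces $D$ to lie in an $N\times I$ component with $\del D$ the meridian $m$; cutting this solid Klein bottle along $D$ yields a $3$-ball, and I can absorb that component into $M_0$, restarting with a strictly smaller $M_1$ until the intersection is empty.

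The main obstacle I anticipate is the bookkeeping in the Dehn-filling step: verifying that the fill slopes can be chosen so the Seifert structure extends, and that an exceptional $\widehat{M_0}$ actually forces $M$ to lie in the exceptional list. A secondary technicality is checking that after absorbing an $N\times I$ component into $M_0$ the enlarged region still admits a compatible Seifert fibering with only isolated singular fibers.
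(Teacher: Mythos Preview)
Your overall strategy---decompose along $P=\del M_0\cap\del M_1$, run an innermost-disk argument on a minimal reducing sphere, and invoke Hatcher's Proposition~1.12 on the isolated-singular-fiber piece---is exactly the paper's. The execution, however, takes an unnecessary detour and misreads the case split.

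The Dehn-filling step is superfluous, and your stated worry about it is justified. Hatcher's Proposition~1.12 already applies to $M_0$ itself (a Seifert fiber space with only isolated singular fibers, boundary permitted): if $M_0$ is reducible, its reducing sphere is horizontal, so $M_0$ is a closed $S^2$-(semi)bundle and therefore $M=M_0$ is already on the exceptional list. Otherwise $M_0$ is irreducible and may be used directly in the innermost-disk step. This is precisely what the paper does, and it dissolves both of your anticipated obstacles at once---no fill slopes, no bookkeeping about whether an exceptional $\widehat{M_0}$ forces $M$ onto the list.

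Your case split on an essential $\del D$ is wrong in both directions. If the innermost disk $D$ lies in an $N\times I$ component, then $\del D$ sits in $P\cap(N\times I)=\del N\times I$, an annulus whose core is the fiber $d$; since $d$ bounds no disk in $N\times I$, such a $\del D$ is automatically inessential in $P$, and the meridian $m$---which necessarily crosses $N\times\del I\subset\del M$---can never arise as $\del D$. So your ``absorb $N\times I$ into $M_0$'' branch is vacuous (and with it your second technicality). Conversely, you omit the case where $D\subset M_0$ with $\del D$ essential on $P$. This genuinely occurs: it means $\del M_0$ is compressible in $M_0$, and by Lemma~\ref{lem: boundary-incompressible} (together with $M_0$ having no singular surfaces) $M_0$ is then a solid torus with $\del M_0\subset\mathrm{int}(M)$, forcing $M_1=N\times S^1$. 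The paper handles this residual case by building an explicit horizontal $2$-sphere in $M_0\cup(N\times S^1)$ and identifying $M$ as $S^2\tilde\times S^1$.
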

\begin{proof}
Let $M_0$ and $M_1$ be as described above. If $M_0$ is reducible, then it must have a horizontal reducing sphere. So $M_0$ is either an $S^2$-bundle over $S^1$ or an $S^2$-semibundle over $I$. Hence $M_0$ is closed, and so $M=M_0$ must be either $S^2 \times S^1$, $S^2 \tilde{\times}S^1$ or $\RP^3 \# \RP^3$. See Proposition 1.12 of Hatcher \cite{Hat} for details. We shall henceforth assume that $M_0$ is irreducible. 

Let $P = M_0 \cap M_1$ be a union of tori, Klein bottle and annuli. Let $S$ be a reducing sphere of $M$ that intersects $P$ minimally in its isotopy class. If $S$ does not intersect $P$, then it lies either in $M_0$ or in an $N \times I$, $N\times S^1$ or $N\tilde{\times} S^1$ component of $M_1$ all of which are irreducible. Therefore, $S$ bounds a ball in $M$ contradicting the fact that $S$ is reducing.

Let $D$ be an innermost disk of $S\cap P$ in $S$. If $D$ lies in a solid Klein bottle component $N\times I$ of $M_1$, then it is either a meridian disk or it is parallel to a disk in $\del N \times I$. As $D \subset S$ does not intersect $N \times \del I \subset \del M$, so $D$ can not be a meridian disk. Isotoping $D$ across $\del N \times I$ reduces the number of components of $S \cap P$, which is a contradiction. So, we may assume that $D$ does not lie in an $N\times I$ component of $M_1$.

As $M_0$ has only isolated singular fibers, so it is not a solid Klein bottle. Assume that $M_0$ is not a solid torus, so by Lemma \ref{lem: boundary-incompressible} $M_0$ is an irreducible manifold with incompressible boundary. As fibers are essential in $M_i$, so the annuli components of $P$ which are all vertical are incompressible in $M_i$. Hence, all the components of $P$ are incompressible in $M_i$, and so $\del D$ bounds a disk $D'$ in $P$. And as $M_i$ is irreducible so $D \cup D'$ bounds a ball in $M_i$. There exists an isotopy which sweeps $D$ across this ball and off $D'$ to reduce the number of components of $S\cap P$, which again contradicts the minimality of this intersection. 

Finally, if $M_0$ is a solid torus and $D$ is a compressing disk for $\del M_0$ in $M_0$, then it is a meridian disk of $M_0$ with horizontal boundary. So, $\del M_0$ lies in the interior of $M$ and hence, $M$ is the union of a solid torus $M_0$ and $M_1=N \times S^1$ along their boundary tori. In this case, we claim that $M$ is $S^2 \tilde{\times} S^1$. Let $D_1$ be a horizontal meridian disks of $M_0$. Let $\gamma_1$ be the boundary of $D_1$ in $\del M_1$. Let $\alpha$ be a horizontal straight arc in $N_0 = N \times t_0$ connecting a point of $\gamma_1 \cap \del N_0$ to a point $p$ of $\del N_0$ which is not on $\gamma_1$.  Let $D_2$ be another meridian disk of $M_0$ with a boundary $\gamma_2$ which passes through $p$ and is parallel to $\gamma_1$ on $\del M_1$. Let $A$ be the horizontal annulus in $M_1$ with boundary $\gamma_1 \cup \gamma_2$ obtained by sweeping $\alpha$ across $\gamma_i$. Then $S=D_1 \cup A \cup D_2$ is a horizontal sphere in $M$. The complement of $S$ in $M$ is an $I$-bundle, so $M$ is an $S^2$-bundle over $S^1$ or an $S^2$-semibundle over $I$ (see Corollary 2.6 of \cite{KalNai}). As $M$ is non-orientable, so it must be $S^2 \tilde{\times} S^1$.
\end{proof}

There are no essential surfaces in $S^2 \times S^1$, $S^2 \tilde{\times} S^1$ and $\RP^3 \# \RP^3$, so we shall henceforth assume that $M$ is a Seifert fibered space which is either a solid torus, a solid Klein bottle or an irreducible manifold with incompressible boundary.
\newline

\begin{figure}
\centering
\def\svgwidth{0.6\columnwidth}
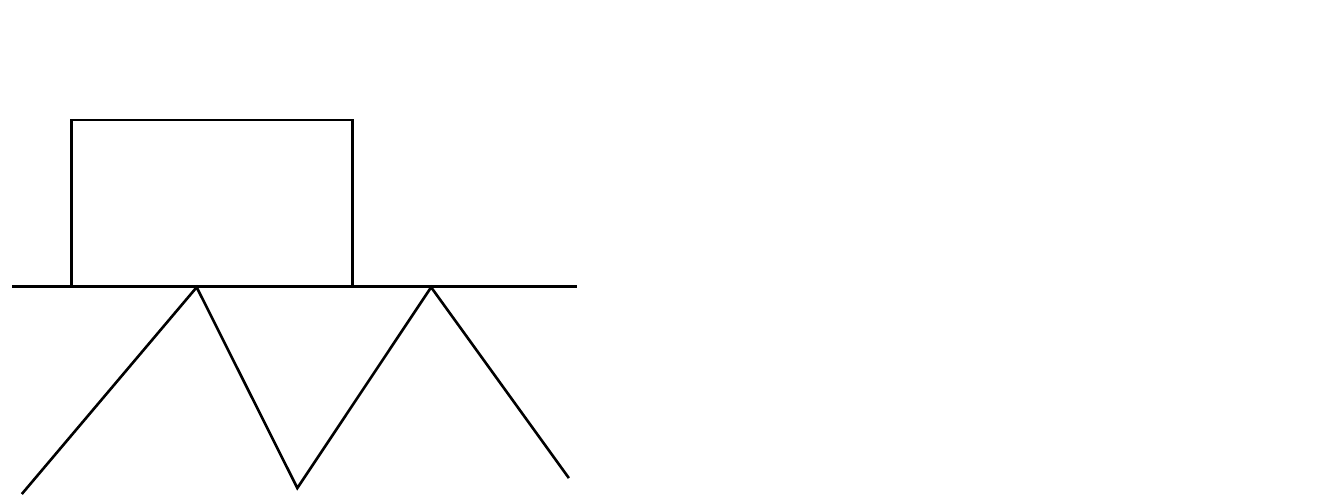
\caption{The CW-complex of the base space $B$ obtained by attaching to $\tau_0$ the rectangles and annuli corresponding to the projection of the model neighbourhoods of the singular surfaces.}\label{fig: CWcomplex}
\end{figure}

\emph{Partition of $M$ into solid tori and solid Klein bottle.}
We first give a CW-complex structure on $B$. Let $\tau_0$ be a simplicial triangulation of $B_0$. Assume that $\tau_0$ is fine enough so that projections of the singular fibers lie in the interiors of distinct disjoint triangles which do not meet the boundary of $B_0$. Each $N \times I$ component of $M_1$ projects down to a rectangle $R$ with one edge $c$ given by the projection of the singular annulus of $N \times I$, two edges $b_1$, $b_2$ given by the projection of $N \times \del I$ and an edge $a$ given by the projection of $\del N \times I$ to $\del B_0$. Attach one such rectangle to $\tau_0$ for each $N\times I$ component of $M_1$ by introducing the $2$-cell $R$, the edges $b_1$, $b_2$ and $c$ and the four corner vertices  (see Figure \ref{fig: CWcomplex}). Similarly, each $N \times S^1$ or $N\tilde{\times} S^1$ component of $M_1$ projects down to an annulus $A$. Give a cell-structure to $A$ by introducing an edge $b$ corresponding to the projection of $N \times t_0$ for some fixed $t_0 \in S^1$, a boundary edge-loop $c$ corresponding to the projection of the singular torus (in $N \times S^1$) or singular Klein-bottle (in $N \tilde{\times} S^1$) and a boundary edge-loop $a$ that lies in $\del B_0$.  Let $R$ denote the $2$-cell $A \setminus \{a, b, c\}$. For each $N\times S^1$ and $N \tilde{\times} S^1$ component of $M_1$, insert such a $2$-cell $R$, the edge $b$ and $c$ and the two end-vertices of $b$ (see Figure \ref{fig: CWcomplex}). Call all the edges corresponding to $b$, $b_1$ and $b_2$ as Mobius edges because their pre-images in $M$ are Mobius strips and call all the edges corresponding to $c$ as singular surface edges as their pre-images in $M$ are singular surfaces. Let $\tau$ be the cell structure of $B$ obtained by attaching these cells to $\tau_0$. 

We will henceforth ignore the singular-surface edges of $\tau$. Let $\mathcal{V}$, $\mathcal{E}$ and $\mathcal{F}$ denote the collection of preimages of the vertices, the edges which are not singular surface edges and the faces of $\tau$ respectively. Each $V \in \mathcal{V}$ is a fiber of $M$. Each $E \in \mathcal{E}$ is a vertical Mobius strip when $E$ is a Mobius edge and is a vertical annulus otherwise. And  each $F\in \mathcal{F}$ is a model regular solid torus when it does not contain any singular fibers, is a model non-regular solid torus when it contains an isolated singular fiber and is a model solid Klein bottle $N \times I$ otherwise. 

\section{Incompressible surfaces in Seifert fiber spaces}
In this section, we study the incompressible surfaces in Seifert fiber spaces and prove Theorem \ref{thm: main theorem}. The main original contribution here is a list of the incompressible surfaces in a solid Klein bottle given in Theorem \ref{thm: incompressible in solid Klein bottle}. The rest of this section extends the proof of Rannard \cite{Ran} in order to apply it to the case when $M$ may have singular surfaces and non-empty boundary.

\begin{definition}
Let $S$ be a properly embedded surface in $M$ that intersects all the annuli and mobius strips $E \in \mathcal{E}$ transversely. Define the complexity $\xi(S)$ of $S$ to be $\sum_{E \in \mathcal{E}} |\pi_0(S \cap E)|$. We say that $S$ is minimal if it has minimal complexity in its isotopy class. We say that $S$ is well-embedded if it is minimal and it intersects each $E\in \mathcal{E}$ horizontally or vertically.
\end{definition}

Applying the below Lemma \ref{lem: well-embedded} to each $E\in \mathcal{E}$ shows that every essential surface in $M$ can be isotoped to a well-embedded surface. This Lemma \ref{lem: well-embedded} combines the arguments of Lemma 3.1 and Lemma 3.2 of Rannard \cite{Ran} and extends it to manifolds with boundary.

\begin{lemma}\label{lem: well-embedded}
Let $S$ be a connected essential minimal surface in $M$. For each $E_0\in \mathcal{E}$ there exists an isotopy of $S$ in an arbitrarily small neighbourhood of $E_0$ that fixes $\del E_0$ and takes $S$ to a minimal surface which intersects $E_0$ horizontally or vertically.
\end{lemma}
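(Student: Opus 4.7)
The plan is to analyze $S\cap E_0$, a properly embedded $1$-manifold in the vertical annulus or Mobius strip $E_0$, and realize every component as horizontal or vertical via an ambient isotopy of $S$ supported in a small collar of $E_0$ and fixing $\del E_0$. The argument splits into a reduction step, using minimality of $S$ to eliminate inessential components of $S\cap E_0$, and a normal-form step within the collar.

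For the reduction step I would run the standard innermost-disk and outermost-arc arguments. An innermost circle $c\subset S\cap E_0$ bounding a disk $D_1\subset E_0$ must also bound a disk $D_2\subset S$ by incompressibility of $S$; the sphere $D_1\cup D_2$ bounds a ball in $M$ by Theorem \ref{thm: irreducible} (or lies in the solid torus or solid Klein bottle case handled directly), and isotoping $S$ across this ball strictly reduces $\xi(S)$, contradicting minimality. An outermost arc $\alpha$ of $S\cap E_0$ cobounding a disk $D_1\subset E_0$ with an arc $\beta\subset\del E_0$ is handled in two subcases: when $\beta\subset\del M$, boundary-incompressibility of $S$ forces $\alpha$ to be boundary-parallel in $S$, and combining this boundary-parallelism with $D_1$ yields an isotopy of $S$ sliding $\alpha$ off $E_0$; when $\beta\subset\mathrm{int}(M)$, the disk $D_1$ itself is a disk in $M$ whose interior misses $S$, and it provides a finger-move of $S$ through $\del E_0$ that deletes $\alpha$. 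Each case strictly decreases $\xi(S)$, contradicting minimality, so every component of $S\cap E_0$ is essential in $E_0$.

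For the normal-form step, when $E_0$ is an annulus every essential circle in $E_0$ is isotopic to the core fiber (vertical) and every essential arc to a transverse spanning arc (horizontal); since a horizontal arc meets every vertical circle, the two types cannot simultaneously appear as disjoint components of the embedded $1$-manifold $S\cap E_0$. Hence every essential component is of a single type, and a standard ambient isotopy of $E_0$ rel $\del E_0$ taking the disjoint essential family onto a standard horizontal or vertical model, extended trivially through a normal collar of $E_0$, produces the desired local isotopy of $S$.

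The main obstacle will be the Mobius strip case of $E_0$, where non-orientability permits extra essential isotopy classes that the basic intersection argument above does not immediately control, most notably boundary-parallel essential circles and essential arcs in the outer annular region that are disjoint from the core fiber. I would rule these out by applying incompressibility of $S$ to any such circle $c$: $c$ cobounds an annulus $A$ with $\del E_0$ inside $E_0$, and a compressing or cobounding region for $c$ in $S$ together with $A$ either produces a further sphere bounding a ball (giving one more complexity reduction via Theorem \ref{thm: irreducible} against minimality) or exhibits $c$ as isotopic to a fiber of $M$, after which a small isotopy in a collar of $E_0$ realizes it as vertical. A similar argument disposes of off-core essential arcs in the Mobius strip case, allowing the annular normal-form step to be carried out in both cases.
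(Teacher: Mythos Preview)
Your reduction step and the annulus normal-form step follow the paper's argument. The substantive issue is your treatment of the Mobius strip case, which you flag as ``the main obstacle'' but which in fact presents no additional difficulty. In the fibered Mobius strip $E_0$ the fibers are the core circle together with circles parallel to $\del E_0$; thus \emph{every} essential simple closed curve in $E_0$, whether isotopic to the core or to the boundary, is already isotopic to a fiber and can be made vertical by exactly the ambient isotopy of $E_0$ rel $\del E_0$ that you use in the annulus case. Moreover, a properly embedded arc in $E_0$ disjoint from the core lies in the annulus $E_0\setminus(\text{core})$ and is therefore boundary-parallel in $E_0$; so the ``essential arcs in the outer annular region disjoint from the core fiber'' that you propose to rule out do not exist. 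Every essential arc crosses the core and is isotopic to a horizontal arc. The paper's Cases III and IV accordingly handle the annulus and Mobius strip uniformly, with no extra input. Your proposed remedy, applying incompressibility of $S$ to a boundary-parallel circle $c\subset E_0$, would not work anyway: such a $c$ may be essential in $S$, so incompressibility yields no disk, and no sphere or ball materializes.

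There is also a genuine omission in your reduction step. When $\beta$ lies on a fiber $V\subset\del E_0$ in the interior of $M$ and you push $S$ across the outermost disk through $V$, you assert this strictly decreases $\xi(S)$, but you do not check what happens to $S\cap E$ for the other $E\in\mathcal{E}$ sharing $V$ in their boundary. The paper verifies this explicitly: the move replaces the two components $\alpha_1,\alpha_2$ of $S\cap E$ meeting $\del\beta$ by the single component $\alpha_1\cup\beta\cup\alpha_2$, so $|\pi_0(S\cap E)|$ does not increase while $|\pi_0(S\cap E_0)|$ drops by one. Without this observation the contradiction with minimality is not established. A similar care is needed in the $\beta\subset\del M$ subcase, where the paper does not simply slide $\alpha$ off $E_0$ but instead uses incompressibility of $\del M$ to locate a boundary-parallel arc of $S\cap E_1$ in some $E_1\subset\del M$ and reduces complexity there.
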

\begin{proof}
We study the various possibilities for the intersection of the surface $S$ with the annulus or Mobius strip $E_0$. \newline
\emph{Case I: A component of $S\cap E_0$ is a closed curve that bounds a disk in $E_0$.} By taking the innermost such disk $D$ in $E_0$ we may assume that $S$ does not intersect the interior of $D$. As $S$ is incompressible, there exists a disk $D_0 \subset S$ with $\del D=\del D_0$. As $M$ is irreducible, $D \cup D_0$ bounds a ball in $M$. When $E_0$ does not lie in $\del M$, we may reduce $\xi(S)$ by isotoping $D_0$ across this ball and off $D$. This contradicts the minimality of $S$. If $E \subset \del M$ then as $S$ is connected $D_0=S$ is a boundary parallel disk, which contradicts the fact that $S$ is essential. So we may assume that no component of $S\cap E_0$ is a trivial closed curve in $E_0$.
  
\emph{Case II: A component of $S \cap E_0$ is a boundary-parallel arc in $E_0$.} By taking the outermost disk $D$ cut off by such arcs, we may assume that the interior of $D$ is disjoint from $S$. Let $\beta$ be the arc $D \cap \del E_0$ which lies in a component $V$ of $\del E_0$. 

If $V$ lies in the interior of $M$ or if $E_0 \subset \del M$, then we claim that isotoping $S$ across $D$ and off $\beta$ reduces $\xi(S)$. Let $E \in \mathcal{E}$ be such that $E \neq E_0$ and $V \subset \del E$. Let $\alpha_1$ and $\alpha_2$ be components of $S \cap E$ that meet the end-points of $\alpha$ ($\alpha_1$ may be equal to $\alpha_2$). After this isotopy, $\{\alpha_1,  \alpha_2\}$ is replaced in the list of connected components of $S\cap E$ by the component $\alpha_1 \cup \beta \cup \alpha_2$. So the number of components of $S\cap E$ does not increase under such an isotopy when $V \subset \del E$. If $E\in \mathcal{E}$ is such that $V$ does not lie in $\del E$, then this local isotopy does not change $S \cap E$. And lastly, this isotopy reduces the number of components of $S\cap E_0$. Therefore, the complexity $\xi(S)$ of $S$ reduces after this isotopy, contradicting the minimality of $S$.

Assume that $V$ lies in $\del M$ and $E_0$ does not lie in $\del M$. The disk $D$ can not be a boundary-compressing disk for $S$, which is given to be essential. So $D\cap S$ is an arc $\alpha$ that is boundary-parallel in $S$. Hence there exists a disk $D_0 \subset S$ with $\del D_0 = \alpha \cup \gamma$, where $\gamma$ is an arc in $\del S$. As $\del M$ is incompressible in $M$, so there exists a disk $D_1 \subset \del M$ such that $\del D_1 = \del (D \cup D_0) = \beta \cup \gamma$. As $\beta$ is vertical, so there exists an $E_1 \in \mathcal{E}$ that lies in $\del M$ such that $\gamma$ cuts off a disk $D'_1 \subset D_1$ from $E_1$. By the above argument applied to $S\cap E_1$ isotoping $S$ across $D'_1$ reduces the complexity of $S$, contradicting the minimality of $S$. So we may assume that no component of $S \cap E_0$ is a boundary-parallel arc in $E_0$.

\emph{Case III: A component of $S \cap E_0$ is an essential closed curve in $E_0$.}
By the above cases, there are no trivial closed curves or boundary-parallel arcs in $S \cap E_0$. If some component of $S \cap E_0$ is an essential closed curve in the annulus or Mobius strip $E_0$, then $S\cap E_0$ is a union of essential closed curves. There is an isotopy of $E_0$ fixing $\del E_0$ which takes these closed curves to fibers of $E_0$. We can extend this isotopy of $E_0$ to a local isotopy of an arbitrarily small neighbourhood of $E_0$, which fixes $\del E_0$ and does not increase $\xi(S)$.

\emph{Case IV: No component of $S \cap E_0$ is a closed curve or a boundary-parallel arc in $E_0$.}
If there are no closed curves and no boundary-parallel arcs in $S\cap E_0$, then there exists a local isotopy of $E_0$ fixing $\del E_0$ which takes $S \cap E_0$ to a union of horizontal arcs. Again, we can extend this isotopy of $E_0$ to a local isotopy in an arbitrarily small neighbourhood of $E_0$ which fixes $\del E_0$ and does not increase $\xi(S)$.
\end{proof}

We now focus on the intersection of an essential surface with the solid tori and solid Klein-bottles in $\mathcal{F}$. The following lemmas lead up to Theorem \ref{thm: incompressible in solid torus} and Theorem \ref{thm: incompressible in solid Klein bottle}. The below Lemma \ref{lem: annuli are vertical} is an extension of Lemma 3.6 of Rannard \cite{Ran} to include boundary-parallel Mobius strips in solid Klein bottle.

\begin{lemma}\label{lem: annuli are vertical}
Let $S$ be a well-embedded essential surface in $M$. Fix $F \in \mathcal{F}$ and let $S_0$ be a component of $S \cap F$. Assume that $S_0$ is a boundary-parallel annulus or Mobius strip in $F$. Then $S_0$ can be isotoped fixing $\del F$ to a vertical surface.
\end{lemma}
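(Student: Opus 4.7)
The plan is to first establish that $\del S_0$ consists of fibers of $\del F$ (i.e., is vertical), and then use the product structure of the parallelism region to construct the desired isotopy to a vertical surface.

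By well-embeddedness, $\del S_0 \cap E$ is either a union of horizontal arcs or a union of vertical curves in each $E \in \mathcal{E}$ bordering $F$. First I would argue that $\del S_0 \cap E$ must be vertical for every such $E$. Suppose for contradiction that $\del S_0 \cap E_0$ is horizontal in some $E_0$. Let $G \subset \del F$ be the subsurface with $\del G = \del S_0$ to which $S_0$ is parallel, and let $W \cong S_0 \times I$ be the parallelism region between $S_0$ and $G$. I would construct an ambient isotopy of $M$ that slides $S_0$ across $W$ and slightly past $\del F$ into the cells of $\mathcal{F}$ adjacent to $F$. A careful count shows that this isotopy removes every component of $\del S_0 \cap E_i$ from $S \cap E_i$, for each $E_i$ bordering $F$, without creating new components, and therefore strictly decreases $\xi(S)$. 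This contradicts the minimality of $S$.

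Once $\del S_0$ is known to consist of fibers, the parallel subsurface $G$ is bounded by fibers of $\del F$. Since $\del F$ is foliated by the fibers, any subsurface bounded by fibers is itself a union of fibers; hence $G$ is a vertical annulus (or Mobius strip) in $\del F$. Using the product structure $W \cong S_0 \times I$, I would isotope $S_0$ through $W$ to a surface $S_0' \subset F$ that is a parallel copy of $G$ lying just inside $\del F$. A standard collar modification arranges this isotopy to fix $\del F$ pointwise. Since $G$ is vertical and the fibration of $F$ extends the fibration of $\del F$ continuously in a collar of $\del F$, the push-off $S_0'$ is itself a union of fibers, so it is vertical, as desired.

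The main obstacle is verifying the complexity reduction in the contradiction step. One must analyze how the subsurface $G$ meets each edge $E_i \subset \del F$ (wholly contained in $E_i$, partially overlapping, or disjoint from it) and check carefully that the push-across isotopy neither creates new intersection components nor fails to remove the horizontal arcs of $\del S_0 \cap E_i$. Special care is needed when $F$ is a solid Klein bottle, whose boundary contains Mobius strip edges $b_j$, and when $\del F$ has components in $\del M$, where pushing $S_0$ past $\del F$ is impossible and we must instead invoke the boundary-incompressibility of the essential surface $S$ in place of the minimality of $\xi(S)$.
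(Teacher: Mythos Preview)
Your overarching plan---show $\del S_0$ must be vertical by contradicting minimality of $\xi(S)$, then straighten $S_0$ using its parallel copy $G \subset \del F$---is exactly the paper's. The divergence is in how the contradiction is produced when $\del S_0$ is horizontal. You want to slide all of $S_0$ across $W$ and out through $G$ into the neighbouring cells and then count. The paper instead makes a single local move: it fixes one $E_0 \subset \del F$ with $E_0 \not\subset \del M$ (such an $E_0$ exists since $\tau$ has more than one $2$-cell, which also disposes of your worry about $\del F \cap \del M$), chooses a vertical arc $\beta \subset G \cap int(E_0)$ joining two horizontal arcs $\delta_1,\delta_2$ of $\del S_0 \cap E_0$, and isotopes $S$ across the meridian disk $D$ of the solid torus $W$ with $\del D = \alpha \cup \beta$, $\alpha \subset S_0$. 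This leaves $\xi(S)$ unchanged but produces boundary-parallel arcs in $S \cap E_0$, whereupon Case~II of Lemma~\ref{lem: well-embedded} strictly reduces $\xi(S)$. The M\"obius case is handled separately and directly: a horizontal curve on the Klein bottle $\del F$ is isotopic to $m$, and since $[m]\neq 0$ in $H_1(\del F;\Z_2)$ it cannot bound any M\"obius strip in $\del F$, so no such $G$ exists.

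Your global push is where the gap lies, and it is more than bookkeeping. The region $G$ meets every vertex fiber $V \in \mathcal{V}$ contained in $\del F$ in a nontrivial arc, so any collar of $G$ on the far side of $\del F$ meets the edges $E' \not\subset \del F$ incident to those $V$; your isotopy therefore alters $S \cap E'$ as well as $S \cap E_i$, and nothing in the plan rules out new components appearing there to offset the removals along $\del F$. You also need $W$ free of other components of $S \cap F$ to perform the sweep at all, which forces passing to an innermost $S_0$ that you have not made. The paper's one-disk compression is supported near a single $E_0$, and its effect on neighbouring edges is already accounted for by the proof of Lemma~\ref{lem: well-embedded}, so none of these issues arise.
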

\begin{proof}
As $S_0$ is boundary-parallel, there exists a surface $S'_0 \subset \del F$ isotopic to $S_0$ in $F$ with $\del S'_0= \del S_0$. As $S$ is well-embedded, $\del S_0$ is either horizontal or vertical in $\del F$. If $\del S_0$ is vertical, then $S'_0$ is a vertical annulus or Mobius strip in $\del F$. Pushing the interior of $S'_0$ into the interior of $F$ we get a properly-embedded vertical surface as required. Assume henceforth that $\del S_0= \del S'_0$ is horizontal in $\del F$. 

If $S'_0 \subset \del F$ is a Mobius strip, then $F$ is a solid Klein bottle and as $\del S'_0$ is an embedded closed curve transverse to the fibration of the Klein bottle, so it must be isotopic to $m$. But as $m$ is non-zero in $H_1(\del F, \Z_2)$, so there can not exist a Mobius strip in $\del F$ with boundary isotopic to $m$. 

If $S'_0 \subset \del F$ is an annulus, then there exists a solid torus $Q$ in $F$ with $\del Q = S_0 \cup S'_0$. As $\del S'_0$ is horizontal, so it intersects each $E \in \mathcal{E}$ that lies in $\del F$ in horizontal arcs. As there is more than one cell in the cell-structure $\tau$ of the base space $B$, so  $\del F$ is not a subset of $\del M$. Take some $E_0\in \mathcal{E}$ that lies in $\del F$ and does not lie in $\del M$. As $\del S'_0 \cap E_0$ is a union of horizontal arcs so there exists a vertical arc $\beta \subset S'_0 \cap int(E_0)$ with $\beta \cap \del S_0 = \del \beta$. Let $\alpha$ be the arc in $S_0$ isotopic relative boundary to $\beta$ in $Q$. Let $D$ be the meridian disk in the solid torus $Q$ with $\del D = \alpha \cup \beta$. Let $\delta_1$ and $\delta_2$ be the horizontal arcs of $S_0 \cap E_0$ which contain $\del \beta$. Let $\beta \times I$ denote a thickening of $\beta$ in $E_0$ with $\del \beta \times I \subset \delta_1 \cup \delta_2$. 
Isotoping $S$ across $D$ and off $\beta$ replaces the two components $\delta_1$ and $\delta_2$ of $S\cap E_0$ with the two components given by $(\delta_1 \cup \delta_2 \cup \beta \times \del I) \setminus (\del \beta \times int(I))$. These arcs cut off disks from $E_0$, however the complexity of $S$ does not change under such a local isotopy. As in Lemma \ref{lem: well-embedded} Case II, by isotoping $S$ off such a disk cut off from $E_0$ the complexity $\xi(S)$ of $S$ can be reduced, contradicting the minimality of $S$.
\end{proof}

Any pair of disjoint essential curves on a torus are isotopic curves. This is however not true on a Klein bottle. The below Lemma \ref{lem: boundary-parallel annulus} says that if the boundary components of a boundary-compressible incompressible surface are isotopic then the surface must be a boundary-parallel annulus.

\begin{lemma}\label{lem: boundary-parallel annulus}
Let $T$ be a torus or Klein bottle boundary component of $M$. Let $S$ be a connected incompressible surface in $M$ and let $\gamma_1$ and $\gamma_2$ be distinct components of $S \cap T$. Let $D$ be a boundary compressing disk of $S$ with $D \cap S$ an arc with endpoints on both  $\gamma_1$ and $\gamma_2$. If $\gamma_1$ is isotopic to $\gamma_2$ in $T$, then $S$ is a boundary-parallel annulus.
\end{lemma}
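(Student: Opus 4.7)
The plan is to boundary-compress $S$ along $D$, recognise the resulting surface $S^*$ as a disk, deduce that $S$ is an annulus with $\del S = \gamma_1 \cup \gamma_2$, and then exhibit $S$ as parallel to an annulus in $T$.

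Since $\gamma_1 \sim \gamma_2$ in $T$, they cobound an annulus $A \subset T$; as the interior of $\beta$ lies in $T \setminus (\gamma_1 \cup \gamma_2)$, which has two annular components, I may choose $A$ so that $\beta \subset A$. Let $R_\beta$ be a thin rectangular neighbourhood of $\beta$ in $T$, with long sides $\beta', \beta''$ in $\mathrm{int}(A)$ and short sides $\sigma_i := R_\beta \cap \gamma_i$. Since $R_\beta$ spans $A$, the closure $D_\delta := A \setminus \mathrm{int}(R_\beta)$ is a disk in $T$ whose boundary $\delta := (\gamma_1 \setminus \sigma_1) \cup \beta' \cup (\gamma_2 \setminus \sigma_2) \cup \beta''$ is precisely the curve that replaces $\gamma_1 \cup \gamma_2$ in $\del S^*$.

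Boundary-compressing $S$ along $D$ produces a surface $S^*$ which is connected (since $\alpha = D \cap S$ joins distinct boundary components of the connected $S$), with $\chi(S^*) = \chi(S) + 1$ and boundary $(\del S \setminus \{\gamma_1, \gamma_2\}) \cup \{\delta\}$. I would push $D_\delta$ slightly into $\mathrm{int}(M)$ to obtain a properly embedded disk $\tilde D_\delta$ with $\del \tilde D_\delta = \delta$. Using incompressibility of $S$ (and the derived incompressibility of $S^*$) together with irreducibility of $M$, an innermost disk argument removes interior intersections of $\tilde D_\delta$ with $S^*$, so we may arrange $\tilde D_\delta \cap S^* = \delta$. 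Then $\tilde D_\delta \cup D_\delta$ is a sphere bounding a ball $B$ by irreducibility, and the component of $S^*$ containing $\delta$ must be a disk (otherwise one would extract a compressing curve in $S^*$ using the ball $B$); by connectedness, $S^*$ is itself a disk. Hence $\chi(S) = 0$ and $\del S = \gamma_1 \cup \gamma_2$, and since no non-orientable surface has $\chi = 0$ with two boundary components, $S$ is an annulus.

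To exhibit $S$ as boundary-parallel, let $N(D)$ be a small regular neighbourhood of $D$ in $M$; it is a ball meeting $B$ exactly in the two parallel copies $D', D''$ of $D$ lying in $\del N(D) \cap S^*$. Then $V := N(D) \cup B$ is two balls glued along two disks, hence a solid torus or a solid Klein bottle, with $\del V = S \cup A$. The disk $D$ is a meridian disk of $V$ and $\del D$ meets each $\gamma_i$ exactly once, so $\gamma_i$ is a longitude of $V$. This forces $V$ to be homeomorphic to $A \times I$ (or its twisted analogue) with $S$ and $A$ as the two horizontal faces, exhibiting $S$ as parallel to $A$. The main obstacle will be the innermost disk argument in the previous paragraph: since $\del \tilde D_\delta = \delta$ shares the arcs $\gamma_i \setminus \sigma_i$ with $\del S$, $\tilde D_\delta$ is not a standard compressing disk for $S^*$, and reducing the intersection must be done carefully through $S$'s incompressibility rather than directly.
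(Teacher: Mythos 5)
Your overall architecture --- boundary-compress $S$ along $D$, cap the resulting boundary curve $\delta$ with the disk $D_\delta\subset T$ pushed into the interior, use incompressibility plus irreducibility to conclude that $S^*$ is a disk and hence that $S$ is an annulus, then exhibit the parallelism region $N(D)\cup B$ --- is essentially the paper's argument. (The paper works with $S$ directly: it builds the disk $D'\cup(D\times\{-1,1\})$, which is your $\tilde D_\delta$, and caps its boundary with a disk $D_1\subset S$ rather than passing to $S^*$; the content is the same.) Your closing worry about $\tilde D_\delta$ is also unfounded: $\delta$ is an entire boundary component of $S^*$, so a small push-off of $\del\tilde D_\delta$ into $S^*$ makes it an honest compressing disk whenever $S^*$ is not a disk. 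You do, however, need the standard fact that $S^*$ inherits incompressibility from $S$, which deserves a line of proof.

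The genuine gap is at the very first step, which is exactly where the hypothesis that $\gamma_1$ is isotopic to $\gamma_2$ does its work. Your justification that $T\setminus(\gamma_1\cup\gamma_2)$ ``has two annular components'' is false in two of the cases the lemma must cover. First, when $T$ is a Klein bottle and the $\gamma_i$ are isotopic to the curve $d$ of Figure~\ref{fig: CurvesKB}, the complement has three components: two Mobius strips and one annulus. The desired conclusion still holds, but only after observing that $int(\beta)$ cannot lie in a Mobius-strip component, since $\beta$ runs from $\gamma_1$ to $\gamma_2$ while each Mobius-strip component meets only one of them; this, together with ruling out curves isotopic to $l_1$ or $l_2$ (which cannot be made disjoint), is precisely the case analysis the paper carries out. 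Second, you never exclude the possibility that the $\gamma_i$ are inessential in $T$: two disjoint nullhomotopic curves bounding disjoint disks do not cobound an embedded annulus, and the closure of the component of the complement containing $int(\beta)$ is then not an annulus. The paper disposes of this case first, showing that a trivial component of $\del S$ forces $S$ to be a boundary-parallel disk, contradicting the fact that $\del S$ has at least two components. Both omissions are repairable, but as written the key claim that $\beta$ lies in an annulus $A$ with $\del A=\gamma_1\cup\gamma_2$ is unsupported precisely where the lemma is delicate --- the paper even inserts an explicit warning that without the isotopy hypothesis the closure of the region containing $int(\beta)$ can fail to be an annulus.
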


\begin{proof}
If a component of $\del S$ is trivial in $T$, then let $D_0$ be the innermost disk bounded by $\del S$ in $T$. As $S$ is incompressible, there exists a disk $D_1 \subset S$ with $\del D_1 = \del D_0$. As $S$ is connected $S=D_1$. And as $M$ is irreducible, so the sphere $D_0 \cup D_1$ bounds a ball in $M$. Hence $S$ is parallel to the disk $D_0 \subset \del M$. This contradicts the fact that $\del S$ has at least two components. So we may assume that $\gamma_1$ and $\gamma_2$ are essential curves in $T$.

Let $\del D=\alpha \cup \beta$ where $\alpha$ and $\beta$ are the arcs $D \cap S$ and $D \cap T$ respectively. If $T$ is a torus, then the closure of any component of the complement of any two disjoint essential closed curves in $T$ is an annulus. Assume that $T$ is a Klein bottle and refer to Figure \ref{fig: CurvesKB} for the labels of the closed embedded curves on $T$. Any two curves on a Klein bottle which are isotopic to $l_1$ (or to $l_2$) must intersect. If the curves $\gamma_i$ are isotopic to $m$, then the closure of any component of their complement in $T$ is an annulus. If the $\gamma_i$ are isotopic to $d$, then the closures of their complementary components give two Mobius strips and an annulus.  In either case, as the endpoints of $\beta$ are in distinct components of $S \cap T$, so the closure of the component of $T \setminus \del S$ which contains $int(\beta)$ must be an annulus $A$ with $\del A = \gamma_1 \cup \gamma_2$. Note that if $\gamma_1$ were not given to be isotopic to $\gamma_2$, for example $\gamma_1 = l_1$ and $\gamma_2 = d$, then the $int(\beta)$ may lie in an open annulus but its closure will not remain an annulus.

Let $N(D)= D \times [-1,1]$ be a regular neighbourhood of $D$ in $M\setminus S$. The boundary of $N(D)$ decomposes as $N(\alpha) \cup N(\beta) \cup (D \times \{-1,1\})$. Let $D'$ be the disk obtained by taking the closure of $A \setminus N(\beta)$. Then $D_0=D' \cup (D \times \{-1,1\})$ is an embedded disk with boundary in $S$. As $S$ is incompressible, so there exists a disk $D_1\subset S$  with $\del D_1 = \del D_0$. As $M$ is irreducible, so $D_0 \cup D_1$ bounds a ball $B$ in $M$ with $B \cap N(D) = D \times \{-1, 1\}$. And $S=D_1 \cup N(\alpha)$ is the union of two disks along a pair of arcs $\alpha \times \{-1,1\}$ on their boundary, so it is either an annulus or a Mobius strip. But as $S$ has at least two boundary components so it can not be a Mobius strip. Furthermore, $S$ cuts off the solid torus $B \cup N(D)$  from $M$ so it is boundary-parallel.
\end{proof}

\begin{lemma}\label{lem: intersect meridian disk in arcs}
Let $F$ be a fibered solid torus or fibered solid Klein bottle and let $S$ be a connected incompressible surface in $F$ with horizontal or vertical boundary. Then, there exists a boundary relative isotopy which takes $S$ to a horizontal disk or to a surface that intersects a horizontal disk transversely in a non-empty collection of arcs which are not boundary-parallel in $S$.
\end{lemma}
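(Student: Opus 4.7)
The plan is to pick a horizontal meridian disk $\Delta$ of $F$---these exist in both the fibered solid torus and the fibered solid Klein bottle (take $D\times\{t_0\}$ in the monodromy model $D\times I/\sim$)---place $\Delta$ transverse to $S$, and isotope $S$ rel $\del S$ to minimize $\xi := |\pi_0(S\cap \Delta)|$. If $\xi=0$, then $S$ lies in the $3$-ball $B=\overline{F\setminus N(\Delta)}$, and since $S$ is connected and incompressible with $\del S$ on $\del B$, it is a properly embedded disk. As $\del S$ is horizontal or vertical it is non-trivial in $\del F$, and so it must be isotopic to the meridian $\del\Delta$ on $\del F$ (otherwise $\del S$ would bound no disk on $\del B$); consequently $S$ is isotopic rel $\del S$ in $B$ to a parallel copy of $\Delta$, giving the first alternative.

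If $\xi\ge 1$, I want to show every component of $S\cap \Delta$ is an arc that is not boundary-parallel in $S$. Closed curves are eliminated by the standard innermost-disk argument: for an innermost closed curve $\gamma\subset \Delta$ of $S\cap\Delta$, the disk $\Delta'\subset \Delta$ it bounds has $\mathrm{int}(\Delta')\cap S=\emptyset$; incompressibility of $S$ gives a disk $E\subset S$ with $\del E=\gamma$; irreducibility of $F$ (Theorem~\ref{thm: irreducible}) ensures $\Delta'\cup E$ bounds a ball, across which $S$ is isotoped to eliminate $\gamma$, contradicting minimality.

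For boundary-parallel-in-$S$ arcs, suppose $\alpha\subset S\cap \Delta$ is such an arc and let $E_\alpha\subset S$ be the associated disk, chosen so that $\mathrm{int}(E_\alpha)\cap \Delta=\emptyset$, with $\del E_\alpha=\alpha\cup\beta$ and $\beta\subset\del S\subset\del F$. The arc $\alpha$ splits $\Delta$ into subdisks $\Delta_1, \Delta_2$ with $\del \Delta_i=\alpha\cup\beta_i$, $\beta_i\subset\del \Delta$, so that $E_\alpha\cup \Delta_i$ is an embedded disk in $F$ with boundary $\beta\cup\beta_i\subset\del F$. The key step is to show that for some $i$, the closed curve $\beta\cup\beta_i$ bounds a disk $\delta\subset\del F$; then by irreducibility the sphere $E_\alpha\cup \Delta_i\cup\delta$ bounds a ball $B'$ in $F$, and pushing the $E_\alpha$-part of $S$ across $B'$ past $\Delta_i\cup\delta$ (and slightly into the interior of $F$) produces a boundary-relative isotopy of $S$ that reduces $\xi$ by at least one, contradicting minimality.

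The main obstacle will be verifying the disk-bounding claim for $\beta\cup\beta_i$. This requires a case analysis depending on whether $\del S$ is horizontal or vertical and whether $\del F$ is a torus or Klein bottle, using the classification of essential curves on $\del F$ summarized in Figure~\ref{fig: CurvesKB} and the fact that $\del \Delta$ is a meridian. The Klein-bottle subcase is the most delicate, since distinct essential curves on a Klein bottle need not cobound an annulus, so one must single out the configurations where $\beta$ and $\beta_i$ do cut off a bigon on $\del F$.
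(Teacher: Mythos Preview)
Your ``key step'' is actually easier than you anticipate and does not require the delicate case analysis you sketch. Since $E_\alpha\cap\Delta_i=\alpha$ (from $\mathrm{int}(E_\alpha)\cap\Delta=\emptyset$), each $E_\alpha\cup\Delta_i$ is a properly embedded disk in $F$, so each $\beta\cup\beta_i$ is automatically a simple closed curve on $\del F$ that bounds in $F$; hence each is either trivial in $\del F$ or a meridian. The two classes differ by $[\del\Delta]$, so exactly one is trivial. No curve-by-curve Klein bottle analysis is needed.

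The genuine gap is the next step. Having found $\delta\subset\del F$ and the ball $B'$ with $\del B'=E_\alpha\cup\Delta_i\cup\delta$, you assert that pushing $E_\alpha$ across $B'$ is a boundary-relative isotopy of $S$ reducing $\xi$. But you have no control over $(S\setminus E_\alpha)\cap B'$: the half-disk $\Delta_i$ that yields the trivial $\beta\cup\beta_i$ is forced on you, and its interior may well contain other arcs of $S\cap\Delta$; likewise $\mathrm{int}(\delta)$ may contain other arcs of $\del S$. In either event $S\setminus E_\alpha$ enters $B'$, and sweeping $E_\alpha$ through $B'$ destroys embeddedness of $S$. You cannot simply choose $\alpha$ to fix this, since ``innermost in $S$'' (needed for $E_\alpha$) and ``outermost in $\Delta$'' (needed to clear $\Delta_i$) are independent conditions, and even then the $\delta$-side is uncontrolled.

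The paper sidesteps this by reversing the roles: it takes the outermost subdisk $D_0\subset D$ (so $\mathrm{int}(D_0)\cap S=\emptyset$ on that side), forms $D'=D_0\cup D_1$ with $D_1\subset S$, and then isotopes the \emph{meridian disk} rather than $S$. Crucially, it also allows $D'$ to be a meridian disk, in which case a small push makes it disjoint from $S$ and forces $S$ to be a disk in the complementary ball. Your proposal only contemplates the boundary-parallel alternative for the combined disk and only contemplates moving $S$, and that is where it breaks.
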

\begin{proof}
Let $D$ be a horizontal meridian disk of $F$. We claim that there exists a boundary-relative isotopy, which takes $S$ to a surface that does not intersect the interior of $D$ in any circles. To see this, let $D_0$ be an innermost disk in $D$ bounded by the circles in $S \cap D$. As $S$ is incompressible, $\del D_0$ bounds a disk $D_1$ in $S$. By the irreducibility of $F$, $D_0 \cup D_1$ bounds a ball in $F$. Therefore $D_1$ can be isotoped off $D_0$ through this ball to reduce the number of circles in $S\cap D$. The claim then follows from induction on the number of circle components of $S \cap D$ in the interior of $D$. 

If $S$ does not intersect the interior of $D$, then it is an incompressible surface in the ball $B=F \setminus N(D)$, and so it must be a boundary-parallel disk in $B$. If $\del S$ is a trivial closed curve in the fibered annulus $\del D \times I \subset \del B$, then it can not be horizontal or vertical, which contradicts our assumption. So $\gamma=\del S$ separates the two copies of $D$ in $\del B$. Hence, there exists a fiber-preserving isotopy of $F$ which takes $\del D$ to the horizontal curve $\gamma$. Such an isotopy takes $D$ to a horizontal disk $D'$ with $\del D' =\gamma$. Applying the above arguments to $S\cap D'$, we can conclude that $S$ does not intersect the interior of $D'$ in any circles and so $S\cap D' = \del S$. Therefore by incompressibility of $S$ and irreducibility of $M$, $S$ is isotopic relative boundary to the horizontal disk $D'$ in $M$. 

Assume that $S$ intersects $D$ in a non-empty collection of arcs. If any of these arcs is boundary-parallel  in $S$ then choose an outermost such arc $\alpha$ that cuts off a disk $D_0$ from $D$. Let $\del D_0 = \alpha \cup \beta$ with $\beta$ as the arc $D_0 \cap \del F$. As $\alpha$ is boundary-parallel in $S$, so there exists a disk $D_1 \subset S$ such that $\del D_1 = \alpha \cup \gamma$ with $\gamma$ the arc $D_1 \cap \del F$. The embedded disk $D'=D_0 \cup D_1$ is either a meridian disk or a boundary parallel disk of $F$. If $D'$ is a meridian disk, then after a slight perturbation we may assume that it is disjoint from $S$. And so by the arguments above, $S$ lies in the ball $F \setminus N(D')$, and hence is isotopic relative boundary to a horizontal disk. If $D'$ is a boundary parallel disk then it bounds a ball $B'$ in $F$. There exists an isotopy of $F$ that is identity oustide a neighbourhood of $B'$, which sweeps $D_0$ across $B'$ and off $D_1$. Restricting this isotopy to $D$ takes $D$ to a meridian disk with fewer number of components of intersection with $S$.
\end{proof}
 
We state below the kind of non-orientable incompressible surfaces that exist in a solid torus, following the characterisation of such surfaces given by Rubinstein \cite{Rub}.

\begin{lemma}[Corollary 2.2 \cite{Fro}]\label{lem: boundary of non-orientable surface in solid torus}
A one-sided incompressible surface in a solid torus has as boundary a single $(2k, q)$-curve with $k\neq 0$. Conversely every $(2k, q)$-curve with $k \neq q$ on the boundary of a solid torus is the boundary of a one-sided incompressible surface in the solid torus.
\end{lemma}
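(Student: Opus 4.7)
The plan is to pass to the orientation double cover of $S$ and reduce to the classification of orientable incompressible surfaces in a solid torus. Write $V$ for the solid torus and let $S \subset V$ be a connected one-sided incompressible surface. Since $V$ is orientable, $S$ is non-orientable, and a regular neighbourhood $N(S) \cong S \tilde{\times} I$ is a twisted $I$-bundle. Let $\tilde S$ denote its horizontal boundary, the orientation double cover of $S$; since $S$ is connected and non-orientable, $\tilde S$ is connected, and since $\del S$ has an annular collar in $S$, the boundary $\del \tilde S$ consists of two parallel copies of $\del S$ on $\del V$. A standard lifted-disk argument, combined with the injection $\pi_1(\tilde S) \hookrightarrow \pi_1(V) = \Z$ (loop theorem applied to any putative compressing disk), shows that $\tilde S$ is incompressible in $V$.

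A connected orientable incompressible surface with boundary in a solid torus satisfies $\mathrm{rk}\,\pi_1 \le 1$ (since $\pi_1(V) = \Z$), so $\tilde S$ is either a meridian disk or a $\del$-parallel annulus; the disk is ruled out because $\tilde S$ has two boundary components, so $\tilde S$ is an annulus. In particular $\del S$ is connected and $\chi(S) = \chi(\tilde S)/2 = 0$, forcing $S$ to be a M\"obius band. Writing $c$ for its core, $[\del S] = 2[c]$ in $H_1(N(S))$; pushing forward to $H_1(\del V)$ in the meridian-longitude basis shows that the longitudinal coefficient of $[\del S]$ is even, so in the lemma's convention $\del S$ is a $(2k,q)$-curve. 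If $k = 0$ then primitivity of the simple curve $\del S$ forces $[\del S] = \pm m$; but then $\del S$ bounds a disk $D$ in $V$, and $S \cup D$ would be an embedded $\RP^2$ in the orientable irreducible manifold $V$, which is impossible. Hence $k \ne 0$.

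For the converse, given a $(2k, q)$-curve $\gamma \subset \del V$ with $\gcd(2k, q) = 1$ and $k$ satisfying the stated non-triviality condition, I would construct an embedded M\"obius band $M \subset V$ with $\del M = \gamma$ directly: take a properly embedded arc in the base disk of $V$ and sweep it along the $S^1$-fibration, performing a twist chosen so that the resulting surface closes up to a M\"obius band whose boundary realizes the slope of $\gamma$. Incompressibility of $M$ then follows by the double-cover analysis applied in reverse: its orientation double cover $\tilde M$ is a $\del$-parallel annulus in the double cover of $V$, and any compressing disk for $M$ would lift to one for $\tilde M$, which does not exist. The main obstacle I expect is this explicit converse construction, namely the careful bookkeeping of framings needed to match the prescribed boundary slope and to verify that the stated restriction on $k$ is precisely what allows the construction to go through.
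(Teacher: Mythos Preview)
The paper does not prove this lemma; it is quoted as Corollary~2.2 of Frohman~\cite{Fro}, resting on Rubinstein's classification~\cite{Rub}. So the question is simply whether your sketch is correct, and it is not: the step asserting that $\tilde S=\partial N(S)$ is incompressible in $V$ fails. Geometric incompressibility of a one-sided $S$ (no embedded disk $D$ with $D\cap S=\partial D$ essential in $S$, which is the definition used here) does \emph{not} imply incompressibility of $\tilde S$. A compressing disk for $\tilde S$ lying in $V\setminus\mathrm{int}\,N(S)$ has boundary on $\tilde S$, but its image under the $2$--to--$1$ projection $\tilde S\to S$ need not be embedded, so no compressing disk for $S$ need result. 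Your invocation of ``the injection $\pi_1(\tilde S)\hookrightarrow\pi_1(V)$'' assumes precisely what is to be shown; when $\chi(S)<0$ the group $\pi_1(\tilde S)$ is free of rank $>1$ and cannot inject into~$\Z$.

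Concretely, a $(4,1)$-curve bounds an incompressible once-punctured Klein bottle in the solid torus, whose double cover (a twice-punctured torus) is compressible. That higher-genus surfaces occur is also visible in the paper: Lemma~\ref{lem: reduce to meridian disks} reduces $S$ to a disk by a \emph{sequence} of boundary compressions, and Theorem~\ref{thm: incompressible in solid torus} lists ``a once-punctured non-orientable surface'' rather than a M\"obius band. The arguments in \cite{Fro,Rub} proceed instead by intersecting $S$ with a meridian disk and analysing the arc pattern---the same device used throughout this section---not by passing to $\tilde S$. Your converse is similarly incomplete: sweeping an arc of the base disk around the fibre with a half-twist only produces boundary slopes of the form $(2,q)$, so the case $|k|>1$ is not addressed.
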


For the sake of exposition we expand on a proof of the below result that has been proved in Rannard \cite{Ran}.
\begin{lemma}[Lemma $3.7$ \cite{Ran}]\label{lem: reduce to meridian disks}
	Let $S$ be a connected incompressible non-orientable surface in a regular solid torus $F\in \mathcal{F}$. Assume that the boundary of $S$ is horizontal or vertical in $\del F$. Then, $S$ can be reduced to a meridian disk by a sequence of boundary compressions with respect to any $E\in \mathcal{E}$ with $E\subset \del F$.
\end{lemma}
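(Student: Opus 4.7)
The plan is as follows. By Lemma~\ref{lem: boundary of non-orientable surface in solid torus}, the boundary $\partial S$ is a single $(2k,q)$-curve with $k\neq 0$, which rules out vertical boundary (as that would force $2k=0$); hence $\partial S$ is horizontal. Since $S$ is non-orientable, it is not a disk, so Lemma~\ref{lem: intersect meridian disk in arcs} supplies a horizontal meridian disk $D\subset F$ meeting $S$ transversely in a non-empty collection of arcs, none boundary-parallel in $S$. An outermost arc $\alpha$ of $S\cap D$ in $D$ cuts off a sub-disk $D_0\subset D$ with $\partial D_0=\alpha\cup\beta$ and $\beta\subset\partial D\subset\partial F$, giving a $\partial$-compression disk for $S$ whose boundary arc $\beta$ lies on the meridian rather than on $E$.

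The crux is to convert $D_0$ into a $\partial$-compression disk with respect to $E$. Write $A=\partial F\setminus\partial S$; as $\gcd(2k,q)=1$ the $(2k,q)$-curve is non-separating on the torus, so $A$ is an annulus whose two boundary circles $\partial^+,\partial^-$ are the two sides of $\partial S$. The algebraic and geometric intersection numbers of $\partial D$ with $\partial S$ both equal $|q|$, so every crossing has the same sign and each crossing of $\partial D$ switches sides of $\partial S$; consequently $\beta$, being an arc of $\partial D\setminus\partial S$ between two consecutive crossings, is a spanning arc of $A$. Similarly $\partial S\cap E$ consists of $|2k|$ parallel horizontal arcs cutting the vertical annulus $E$ into $|2k|$ rectangles, and any fiber sub-arc of such a rectangle is a spanning arc of $A$. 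Since any two spanning arcs of an annulus are freely ambient isotopic, there is an ambient isotopy of $A$ taking $\beta$ to such a vertical arc $\beta'\subset E$; gluing yields an ambient isotopy of $\partial F$ fixing $\partial S$ setwise, which extends via a collar to an ambient isotopy of $F$ supported near $\partial F$. Under it, $(S,D_0)$ becomes $(S',D_0')$ with $D_0'\cap\partial F=\beta'\subset E$: this $D_0'$ is the desired $\partial$-compression disk with respect to $E$.

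Boundary-compressing $S'$ along $D_0'$ produces a surface $S_1$ with $\chi(S_1)=\chi(S)+1$, which remains incompressible in the irreducible manifold $F$. If $S_1$ is a meridian disk we are done; otherwise, we apply Lemma~\ref{lem: well-embedded} to isotope $S_1$ so that its boundary is again horizontal or vertical on every annulus of $\partial F$, and then repeat the construction. Since each boundary compression strictly decreases $-\chi$, the process terminates, and the terminal incompressible horizontal disk in $F$ must be a meridian disk. I expect the main obstacle to be the key step above, particularly verifying that $\beta$ is a spanning arc of $A$ and that the free isotopy in $A$ extends to an ambient isotopy of $F$ carrying the $\partial$-compression disk structure without destroying embeddedness or the essentiality of $\alpha$ in the isotoped surface.
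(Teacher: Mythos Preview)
Your overall strategy matches the paper's: extract a $\partial$-compression disk from an outermost arc of $S\cap D$, slide its $\partial F$-arc into $E$ via the complementary annulus $A=\partial F\setminus\partial S$, compress, and iterate on Euler characteristic. Your spanning-arc argument for carrying $\beta$ into $E$ is a clean repackaging of the paper's explicit $I$-fibering and collar construction; the two are essentially the same manoeuvre.

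The genuine gap is in the iteration. You assert that the process terminates at ``the terminal incompressible horizontal disk,'' but you never check that the compressed surface $S_1$ is \emph{connected} (nor that it has connected boundary). Without this, a single $\partial$-compression could split $S$ into pieces, and your $\chi$-count no longer yields a single meridian disk at the end. The paper fills exactly this hole, and the key input is something you have already established: since $\beta$ is a spanning arc of $A$, the normal to $S$ pointing into $D_0$ at $\alpha(0)$ and at $\alpha(1)$ lies on \emph{opposite} sides of $\partial S$ in $\partial F$. Transporting this normal from $\alpha(1)$ back to $\alpha(0)$ along $\partial S$ therefore reverses it, so the $1$-handle $N(\alpha)$ is attached to the annular collar $N(\partial S)$ with a twist, and $N(\partial S)\cup N(\alpha)$ is a once-punctured Mobius band. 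Hence $S\setminus\alpha$ is connected and $S_1$ again has a single boundary circle; being incompressible, not a disk, and with one boundary component in a solid torus, $S_1$ is again one-sided, and the induction runs cleanly.

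A minor point: Lemma~\ref{lem: well-embedded} concerns essential minimal surfaces in $M$, not pieces inside a single $F$, so your appeal to it to re-horizontalize $\partial S_1$ is misplaced. Instead, simply observe that the (connected, essential) curve $\partial S_1$ on the torus $\partial F$ can be ambient-isotoped to a straight curve transverse to the fibration, and extend this isotopy over $F$ before reapplying Lemma~\ref{lem: intersect meridian disk in arcs}. (The paper glosses over this re-straightening step as well.)
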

\begin{proof}
By Lemma \ref{lem: boundary of non-orientable surface in solid torus}, $S$ is a connected incompressible surface with connected boundary. If $S$ is not a disk, then we will show that there exists a boundary compression of $S$ along $E$ that increases its Euler characteristic by one while keeping $S$ a connected incompressible surface with connected boundary.

By Lemma \ref{lem: intersect meridian disk in arcs}, there exists a boundary-relative isotopy that takes $S$ to a surface which intersects a horizontal disk $D$ in a non-empty collection of arcs which are not boundary-parallel in $S$. Let $\alpha$ be an outermost arc which cuts off a disk $D_0$ from $D$. Let $\del D_0 = \alpha \cup \beta$ where $\beta$ is the arc $\del F \cap D_0$. Assume that $\beta$ lies in $E$.  Let $N(\del S)$ be an annulus neighbourhood of $\del S$ in $S$ and let $N(\alpha)$ be a rectangular neighbourhood of $\alpha$ in $S$. Translating a normal vector to $S$ pointing into $D_0$ along $\alpha$ from $\alpha(0)$ to $\alpha(1)$ followed by a translation along $\del S$ from $\alpha(1)$ to $\alpha(0)$ reverses the original vector. So the $1$-handle $N(\alpha)$ is attached to the annulus $N(\del S)$ with a twist, giving a once-punctured Mobius strip $Q$. The complement of $\alpha$ in $Q$ is connected and hence its complement in $S$ is also connected. So compressing $S$ along the disk $D_0$ gives the required connected incompressible surface with connected boundary and with Euler characteristic one more than that of $S$.

We will now obtain such a boundary-compressing disk when $\beta$ is not in $E$. We first claim that $\beta$ can be isotoped to lie arbitrarily close to a vertical arc via an isotopy through arcs with endpoints on $\gamma=\del S$. Let $A$ be an annulus such that $\del F$ is obtained by identifying the interior of $A$ with $\del F \setminus \gamma$ and $\del A$ with $\gamma$. $A$ has an $I$-fibering induced by the fibering of $\del F$. Let $\gamma_0$ and $\gamma_1$ denote the boundary components of $A$, containing $\beta(0)$ and $\beta(1)$ respectively.  As $\del D$ and hence $\beta$ is horizontal, so $\beta(0)$ and $\beta(1)$ do not lie on the same $I$-fiber of $A$. Let $\overline{\beta}$ be the projection of $\beta$ on $\gamma_1$. As $\del D$ and hence $\beta$ intersects each fiber of $\del F$ at most once so the vertical $I$-fiber of $A$ at $\beta(0)$ followed by $\overline{\beta}$ followed by the reverse of $\beta$ is an embedded closed curve that bounds a disk in $A$. This gives an isotopy in $A$ taking $\beta$ to the $I$-fiber at $\beta(0)$ via arcs with one endpoint fixed at $\beta(0)$ and the other on $\overline{\beta}$. No point of $\overline{\beta}$ other than possibly $\overline{\beta}(0)$ is identified with $\beta(0)$ in $\gamma$ as $\beta$ intersects each fiber of $\del F$ at most once. If $\overline{\beta}(0)$ is identified with $\beta(0)$ in $\gamma$, then $\gamma$ intersects each fiber of $\del F$ exactly once and so, the two boundary components of $A$ are identified by full twists, otherwise they are identified via some rational twist. In either case, we have an isotopy in $\del F$ that takes $\beta$ close to a vertical arc via arcs with endpoints on $\del S$. Following this up with a rotation in $A$ takes $\beta$ to an arc $\delta$ in $E \cap A$ via some isotopy $H: I \times I \to \del F$.

Let $P = \del F \times I$ be a neighbourhood of $\del F$ in $F$, such that $S \cap P = \del S \times I$, $D \cap P = \del D \times I$, $D_0 \cap P = \beta \times I$. Let $F'$ be the closure of $F \setminus P$, i.e., the inner solid torus. Let $D'_0 = F' \cap D_0$, $\beta' = \del F' \cap D'_0$, $\alpha'= F' \cap \alpha$ and $S' = F' \cap S$. Define $G: I \times I \to \del F \times I = P$ by $G(s, t) = (H(s, t), t)$. $G$ is then an embedding of a rectangle with boundary consisting of a pair of opposite edges $\eta_1 \cup \eta_2$ on $\del S \times I = S \cap P$ and the remaining pair of edges as $\beta' \cup \delta$. Let $\alpha'' = \alpha' \cup \eta_1 \cup \eta_2$  and let $D_1 = D'_0 \cup G(I \times I)$. Then, the disk $D_1$ has boundary $\alpha'' \cup \delta$ with $\alpha''$ the arc $\del D_1 \cap S$ and $\delta$ the arc $\del D_1 \cap \del F$. As $\alpha''$ is isotopic on $S$ to $\alpha$ which is not boundary-parallel in $S$, so $D_1$ is a boundary-compression of $S$ with respect to $E$. 
\end{proof}

\begin{lemma}\label{lem: incompressible pieces}
Let $P$ be a properly embedded $2$-sided surface in $M$ that is a union of some $E\in \mathcal{E}$. Let $S$ be an incompressible minimal surface in $M$. Let $M'$ be the closure of a component of $M \setminus P$. Then, $S\cap M'$ is an incompressible surface in $M'$.
\end{lemma}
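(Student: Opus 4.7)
The plan is a standard innermost disk argument, exploiting the irreducibility of $M$ (Theorem \ref{thm: irreducible}) and the incompressibility of the vertical annuli in $\mathcal{E}$. Suppose for contradiction that $S \cap M'$ admits a compressing disk $D$ in $M'$, so that $\partial D$ is an essential simple closed curve in $S \cap M'$, necessarily disjoint from $P$. Since $S$ is incompressible in $M$ and $D \cap S = \partial D$, the curve $\partial D$ must bound a disk $D_0 \subset S$. Because $\partial D$ is essential in $S \cap M'$, $D_0$ cannot lie in $M'$, so $D_0 \cap P$ is nonempty; and since $\partial D_0$ is disjoint from $P$ and $D_0 \subset \mathrm{int}(S) \subset \mathrm{int}(M)$ is disjoint from $\partial P \subset \partial M$, the intersection $D_0 \cap P$ is a disjoint collection of simple closed curves in the interior of $D_0$.

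I would then take an innermost such curve $c \subset D_0 \cap P$, bounding a subdisk $D_1 \subset D_0$ whose interior is disjoint from $P$. The curve $c$ lies in some 2-sided component of $P$, i.e.\ in some vertical annulus $E_0 \in \mathcal{E}$. Each such $E_0$ is incompressible in $M$, since its core is a fiber and the fiber has infinite order in $\pi_1(M)$ under our standing hypothesis (we have ruled out the exceptional Seifert fiber spaces after Theorem \ref{thm: irreducible}). A standard argument extends this to show that $P$ itself is incompressible in $M$. Hence $c$, which bounds the disk $D_1$ in $M$, must also bound a disk $\delta \subset P$; and by irreducibility of $M$, the sphere $D_1 \cup \delta$ bounds a ball $B_1 \subset M$.

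To finish, I would construct an ambient isotopy of $S$ using $B_1$: push $D_1 \subset S$ across $B_1$ through $\delta$ and then a small amount further off $P$, eliminating the curve $c$ from $S \cap P$. By choosing $c$ and $\delta$ to be innermost — iterating the argument on any subcurves of $S \cap P$ lying in $\delta$, and using the incompressibility of $S$ together with irreducibility of $M$ to compress them away, exactly as in Case I of Lemma \ref{lem: well-embedded} — one arranges that the isotopy introduces no new components of $S \cap E$ for any $E \in \mathcal{E}$. The resulting surface $S'$ is isotopic to $S$ in $M$ with $\xi(S') < \xi(S)$, contradicting the minimality of $S$ in its isotopy class.

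The main obstacle lies in the final step: ensuring that the ambient isotopy across $B_1$ strictly decreases $\xi(S)$ without creating new intersections with other annuli in $\mathcal{E}$. This is handled by localizing the support of the isotopy to a small neighborhood of $B_1$ (chosen to avoid the rest of the $2$-skeleton $\bigcup_{E} E$) and by iterating innermost-disk reductions to eliminate any obstructions inside $\delta$ or $B_1$ before performing the push.
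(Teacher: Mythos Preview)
Your proof is correct but takes a longer route than the paper's. Having found the disk $D_0 \subset S$ with $\partial D_0 = \partial D$, the paper does not pass to an innermost curve of $D_0 \cap P$ at all: it simply observes that the sphere $D \cup D_0$ bounds a ball in $M$ by irreducibility, and isotopes $D_0$ across this ball to $D$. Since $D \subset M'$ is disjoint from $P$ while $D_0$ meets $P$ nontrivially, this single move already removes components of $S \cap P$ and hence decreases $\xi(S)$. Your detour through an innermost curve $c \subset D_0 \cap P$, the incompressibility of $P$, and the auxiliary disk $\delta \subset P$ is therefore unnecessary --- the compressing disk $D$ itself already serves as the target disk for the isotopy. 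What your approach buys is tighter control over the support of the isotopy (it is localized near a single component of $S \cap P$), which is why you were able to articulate the ``main obstacle'' more explicitly than the paper does; the paper's version is shorter and does not need to invoke incompressibility of $P$.
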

\begin{proof}
Suppose that $S'=S\cap M'$ is compressible. Let $D$ be a compressing disk for $S'$ in $M'$. As $S$ is incompressible in $M$, so there exists a disk $E \subset S$ such that $\del D = \del E$. As $\del D$ is essential in $S'$, so $E$ does not lie in $S'$ and must therefore intersect $P$. As $M$ is irreducible, the sphere $D \cup E$ bounds a ball in $M$. Isotoping $E$ across this ball to $D$ reduces the number of components of $S \cap P$. As $P$ is a union of some $E\in \mathcal{E}$, so this isotopy reduces $\xi(S)$, contradicting the fact that $S$ is minimal. Therefore, $S'$ is incompressible in $M'$. 
\end{proof}

We list below the possible incompressible surfaces in a solid torus $F\in \mathcal{F}$. Note that the solid torus may have a non-regular fibering. The following Theorem \ref{thm: incompressible in solid torus} follows from Lemma 3.5 and Lemma 3.6 of Rannard \cite{Ran}. We give a proof here for the sake of exposition and to ensure that the isotopy is boundary-relative.

\begin{theorem}\label{thm: incompressible in solid torus}
Let $S$ be a connected well-embedded essential surface in $M$ and let $F \in \mathcal{F}$ be a solid torus. Then, there is an isotopy of $S$ which pointwise fixes all $E \in \mathcal{E}$, and takes $S\cap F$ to a union of the following components:
\begin{enumerate}
	\item{A vertical boundary-parallel annulus}
	\item{A horizontal disk}
	\item{A once-punctured non-orientable surface whose boundary is neither a meridian nor a longitude of $F$}
\end{enumerate}
\end{theorem}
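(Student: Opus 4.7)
The plan is to analyze $S\cap F$ one component at a time. Fix a component $S_0$ of $S\cap F$. By Lemma~\ref{lem: incompressible pieces}, $S_0$ is incompressible in the solid torus $F$, and because $S$ is well-embedded each component of $\del S_0$ is a horizontal or vertical simple closed curve in $\del F$. Since $F$ is irreducible with $\pi_1(F)\cong\Z$, any connected two-sided incompressible $S_0$ has $\pi_1(S_0)$ injecting into $\Z$, so it is a disk or an annulus; closed incompressible surfaces are ruled out for the same reason (cyclic $\pi_1$) together with irreducibility. If $S_0$ is one-sided, Lemma~\ref{lem: boundary of non-orientable surface in solid torus} forces $\del S_0$ to be a single $(2k,q)$-curve on $\del F$ with $k\neq 0$.

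I would then treat the three cases in turn. If $S_0$ is a disk, then $\del S_0$ is null-homotopic in $F$, whereas any vertical curve on $\del F$ represents a non-zero element of $\pi_1(F)\cong\Z$ (equal to the multiplicity of the core fiber times the generator, in both the regular and non-regular cases); so $\del S_0$ must be horizontal. Every properly embedded arc in a disk is boundary-parallel, so the second alternative of Lemma~\ref{lem: intersect meridian disk in arcs} cannot occur, and that lemma delivers a $\del F$-relative isotopy inside $F$ taking $S_0$ to a horizontal meridian disk, giving case~(2). If $S_0$ is an annulus, then since a meridional annulus is compressible, $S_0$ is boundary-parallel in $F$, and Lemma~\ref{lem: annuli are vertical} supplies a $\del F$-fixing isotopy inside $F$ making $S_0$ vertical, giving case~(1). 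If $S_0$ is one-sided, then a meridian of $F$ has slope $(1,0)$ and a longitude has slope $(0,1)$; since $2k=1$ is impossible and $2k=0$ is excluded by $k\neq 0$, the boundary of $S_0$ has the form $(2k,q)$ with $k\neq 0$, hence is neither a meridian nor a longitude, so case~(3) holds with no further isotopy required.

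Performed simultaneously on the pairwise disjoint components of $S\cap F$ and on each solid-torus $F\in\mathcal{F}$, the isotopies above are supported in the interiors of these solid tori and therefore pointwise fix every $E\in\mathcal{E}$. The most delicate point is that the isotopies produced by Lemmas~\ref{lem: intersect meridian disk in arcs} and~\ref{lem: annuli are vertical} can be arranged to fix $\del F$ pointwise rather than merely $\del S_0$; this works because in both lemmas the moving pieces sweep across balls in the interior of $F$ supplied by the irreducibility of $F$, never across $\del F$. A secondary subtlety is to handle regular and non-regular fibered solid tori uniformly; the key observation is that a vertical curve on $\del F$ always represents a non-trivial element of $\pi_1(F)$ of magnitude equal to the multiplicity of the singular fiber, which is exactly what rules out a vertically bounded disk in either case.
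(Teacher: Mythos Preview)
Your route to the trichotomy is different from the paper's and in some ways slicker: the paper reaches the three cases by intersecting $S_0$ with a meridian disk (Lemma~\ref{lem: intersect meridian disk in arcs}), taking an outermost arc, and then invoking Lemma~\ref{lem: boundary-parallel annulus} when the arc joins two distinct boundary curves or a normal-vector argument when it joins one curve to itself. You instead use $\pi_1$-injectivity of a two-sided incompressible surface to force disk-or-annulus directly, and quote Lemma~\ref{lem: boundary of non-orientable surface in solid torus} for the one-sided case. That is a legitimate alternative, but two steps are not yet justified.

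First, your sentence ``since a meridional annulus is compressible, $S_0$ is boundary-parallel in $F$'' is a non sequitur: ruling out meridional boundary does not by itself force boundary-parallelism. You need the (standard but not one-line) fact that every incompressible annulus in a solid torus is boundary-parallel; the paper obtains exactly this via Lemma~\ref{lem: boundary-parallel annulus} applied to the outermost-arc boundary-compressing disk. Either cite that fact, or run the paper's outermost-arc argument here.

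Second, and more seriously, ``performed simultaneously on the pairwise disjoint components'' is where the real content hides, and your paragraph does not supply it. The isotopy of Lemma~\ref{lem: annuli are vertical} that straightens one boundary-parallel annulus sweeps across a solid torus in $F$ and can a priori pass through other components of $S\cap F$. The paper spends its last two paragraphs sorting this out: (i) the boundary slope of a horizontal disk (the meridian) differs from both the fiber slope and any $(2k,q)$ slope with $k\neq 0$, so a horizontal disk forces \emph{all} components to be horizontal disks and a single $\del F$-relative isotopy handles them; (ii) two one-sided surfaces with the same boundary slope must intersect, so there is at most one non-orientable component; (iii) each boundary-parallel annulus separates $F$ into two solid tori and the non-orientable piece lies in exactly one of them, so the annuli can be pushed, innermost first, into a collar of $\del F$ where they are vertical, leaving the non-orientable piece untouched. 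Your argument needs this nesting analysis (or an equivalent) before you can claim a single isotopy fixing all $E\in\mathcal{E}$.
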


\begin{proof}
Let $P$ be the union of all $E\in \mathcal{E}$ that lie in $\del F$ and do not lie on $\del M$. By Lemma \ref{lem: incompressible pieces}, $S\cap F$ is an incompressible surface. And as $S$ is well-embedded so the boundary of $S\cap F$ is either horizontal or vertical. Let $S_0$ be a component of $S\cap F$. By Lemma \ref{lem: intersect meridian disk in arcs}, there exists an isotopy which fixes $\del F$ and takes $S_0$ to either a horizontal disk or to a surface that intersects a horizontal disk $D$ in arcs that are not boundary-parallel in $S_0$. Assume that $S_0$ is not isotopic to a horizontal disk.

Let $D_0$ be the outermost disk cut off by arcs of $S_0 \cap D$ in $D$. Let $\del D_0 = \alpha \cup \beta$ where $\alpha = S_0 \cap D_0$ and $\beta = \del F \cap D_0$. Let $\gamma_1$ and $\gamma_2$ be the components of $\del S_0$ that contain $\del \alpha$. If $\gamma_1$ and $\gamma_2$ are distinct curves then by Lemma \ref{lem: boundary-parallel annulus}, $S_0$ can be isotoped fixing boundary to a boundary-parallel annulus. By Lemma \ref{lem: annuli are vertical}, $S_0$ can be isotoped fixing boundary to a  vertical annulus and in particular, $\del S_0$ is a vertical curve. If $\gamma_1 = \gamma_2$, then we claim that $S_0$ is a one-sided surface in a solid torus and is therefore non-orientable. To see this, observe that the normal vector field to $S_0$ along $\alpha$ pointing into $D_0$, followed by the normal vector field along $\gamma_i$ from one end-point of $\alpha$ to the next gives a normal vector field along a closed curve on $S_0$ that reverses the direction of the initial vector. 
By Lemma \ref{lem: boundary of non-orientable surface in solid torus}, the boundary of $S_0$ is connected and is neither a meridian nor a longitude of $F$. $S_0$ is therefore isotopic relative boundary to either a horizontal disk, a vertical boundary-parallel annulus or to a once-punctured non-orientable surface whose boundary is neither a meridian nor a longitude. We shall now see that there exists a boundary-relative isotopy of $F$ which takes all components of $S\cap F$ simultaneously to one of these three types.

As the Dehn surgery slope at an isolated singular fiber of $M$ is finite, so the slope of a fiber of $\del F$ is non-zero even when $F$ is a non-regular solid torus. And by Lemma \ref{lem: boundary of non-orientable surface in solid torus}, the slope of the boundary of a non-orientable surface is also non-zero. Therefore if some component of $S\cap F$ is isotopic relative boundary to a horizontal disk then every component of $S\cap F$ is isotopic relative boundary to a horizontal disk. So a single isotopy fixing $\del F$ exists taking $S\cap F$ to a union of horizontal disks.

Assume that $S \cap F$ is a union of boundary-parallel annuli and non-orientable surfaces. Any two one-sided surfaces in $F$ with isotopic boundary slopes must intersect, so there is at most one such surface. As any boundary parallel annuli separates $F$ into two solid tori, so only one of these pieces can contain the non-orientable surface component. So as in Lemma \ref{lem: annuli are vertical}, there is an isotopy of $F$ which isotopes all the boundary-parallel annuli with vertical boundary into a neighbourhood of $\del F$ where they are vertical.
\end{proof}

We now list the possible incompressible surfaces in a solid Klein bottle. 
\begin{theorem}\label{thm: incompressible in solid Klein bottle}
Let $F=N \times I$ be a fibered solid Klein bottle and let $S$ be an incompressible surface in $F$. Assume that the boundary of $S$ is horizontal or vertical in $\del F$. Then $S$ can be isotoped relative boundary to a union of the following components (see Figure \ref{fig: IncompressibleKB}):
\begin{enumerate}
\item{A horizontal disk.}
\item{A vertical boundary-parallel Mobius strip with boundary $d$.}
\item{A vertical boundary-parallel annulus with boundary two copies of $d$.}
\item{A vertical one-sided annulus with boundary $l_1 \cup l_2$.}
\item{A one-sided pair of pants with boundary $l_1 \cup l_2 \cup d$.}
\end{enumerate}
\end{theorem}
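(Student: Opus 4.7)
The plan is to reduce to the case of connected $S$ (treating components separately) and then apply Lemma~\ref{lem: intersect meridian disk in arcs} to a horizontal meridian disk $D$ of $F$: either $S$ is isotopic rel $\partial$ to a horizontal disk (giving case (1)), or $S\cap D$ is a non-empty collection of arcs, none boundary-parallel in $S$. In the latter case I would take an outermost arc $\alpha$ on $D$ cutting off a disk $D_0\subset D$ with $\partial D_0=\alpha\cup\beta$ and $\beta\subset\partial F$, and perform a boundary-compression of $S$ along $D_0$. Each such boundary-compression strictly increases $\chi(S)$ and drops the number of arcs of $S\cap D$, so iteration terminates; the classification of $S$ then amounts to tracking which sequences of inverse boundary-compressions from a base case can produce the surfaces in the list (1)--(5). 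Throughout, let $\gamma_1,\gamma_2\subset\partial S$ be the components containing the endpoints of $\alpha$.

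If $\gamma_1\neq\gamma_2$ and they are isotopic in the Klein bottle $\partial F$, Lemma~\ref{lem: boundary-parallel annulus} forces $S$ to be a boundary-parallel annulus. The only disjoint isotopic essential curves on $\partial F$ are $m\cup m$ and $d\cup d$, because $l_1,l_2$ are one-sided in $\partial F$ and therefore admit no disjoint parallel copy. The $m\cup m$ case would be compressible (the core of the cobounding annulus is isotopic to $m$, which bounds a meridian disk of $F$), contradicting incompressibility; so $\partial S=d\cup d$ and Lemma~\ref{lem: annuli are vertical} delivers case (3). If $\gamma_1\neq\gamma_2$ are not isotopic, both are vertical (since $m$ is the unique horizontal essential isotopy class on $\partial F$), forming $\{l_1,l_2\}$, $\{l_1,d\}$, or $\{l_2,d\}$. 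Using $\pi_1(F)=\Z$ with $[l_i]=1$ and $[d]=2$, no incompressible annulus can have boundary $\{l_i,d\}$. Hence $\{l_1,l_2\}$ yields the one-sided annulus $l\times I$ of case (4), and the mixed pairs force a third boundary component, producing the one-sided pair of pants of case (5).

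If $\gamma_1=\gamma_2=:\gamma$, I would adapt the normal-vector argument from the proof of Lemma~\ref{lem: reduce to meridian disks}: parallel-transporting a normal to $S$ pointing into $D_0$ along $\alpha$ and back along $\gamma$ is orientation-reversing precisely when $\alpha\cup\gamma$ lies on a M\"obius-band piece of $S$. In the reversing case, boundary-compressing $S$ along $D_0$ yields a connected incompressible surface of strictly higher Euler characteristic; iterating reduces to a horizontal disk, and reversing the reductions realizes the vertical boundary-parallel M\"obius strip with boundary $d$ of case (2). In the preserving case the boundary-compression splits $\gamma$ into two new components, reducing us to the previous paragraph.

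The main obstacle I anticipate is ruling out exotic incompressible surfaces beyond the five listed: verifying exhaustiveness requires combining the $\pi_1(F)=\Z$ constraint, the one-sidedness of $l_1,l_2$ in $\partial F$, and careful Euler-characteristic accounting under boundary-compression to show that every possible terminal boundary-compression sequence realizes one of cases (1)--(5). The second delicate point is ensuring that each iterative boundary-compression preserves connectedness and incompressibility in the relevant sub-case, so that the reverse reconstruction does classify $S$ up to isotopy rel boundary.
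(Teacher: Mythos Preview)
Your approach via iterative boundary-compressions differs from the paper's, and the gap you yourself anticipate is real and is precisely where the outline breaks down. Once $\partial S$ is vertical and contains both $l_1$ and $l_2$, nothing in your argument excludes $\partial S = l_1\cup l_2\cup d_1\cup\cdots\cup d_k$ with $k\ge 2$. Your $\pi_1$ remark only rules out an incompressible \emph{annulus} with boundary $\{l_i,d\}$; it says nothing about connected surfaces with four or more boundary circles. Likewise, when an outermost arc of $S\cap D$ runs between $l_1$ and $l_2$ you jump to ``case~(4)'', but that arc only names two of the boundary components of $S$, not that there are no others. Tracking inverse boundary-compressions from a disk does not by itself separate the incompressible reconstructions from the compressible ones, so your induction cannot close without an independent obstruction for $k\ge 2$. (A minor side issue: Lemma~\ref{lem: annuli are vertical} is stated for a well-embedded surface in $M$ and uses minimality of $\xi$; in the standalone setting of $F$ you should instead argue directly that a boundary-parallel annulus with vertical boundary is isotopic rel boundary into $\partial F$.)

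The paper sidesteps the inductive bookkeeping entirely by cutting $F$ open along the meridian disk $D$ into a ball $B=D\times I$, so that $S'=S\cap B$ is a disjoint union of disks and the classification reduces to a finite combinatorial analysis of the arc system $S'\cap D_1$. When $l_1,l_2\subset\partial S$ the pattern is forced (outermost arcs from $l_1$ to $d_1$ and from $l_2$ to $d_k$, with nested parallel arcs joining $d_i$ to $d_{i+1}$), one checks that $\partial S'$ is a single curve so $S'$ is a single disk, and for $k\ge 2$ an explicit compressing disk for $S$ is exhibited: the rectangle $G$ in $\partial D\times I$ between the vertical sub-arcs $d_1'$ and $d_2'$, whose two horizontal edges are identified under the reflection gluing $D_1\to D_0$ with arcs $\gamma_0,\gamma_1$ already lying on $S'$, and whose boundary meets the arc from $l_1$ to $d_1'$ exactly once and hence is essential in $S$. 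That explicit compressing disk is the missing ingredient; supplying it, or an equivalent obstruction for $k\ge 2$, is what your plan still needs.
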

\begin{figure}
\centering
\def\svgwidth{1\columnwidth}
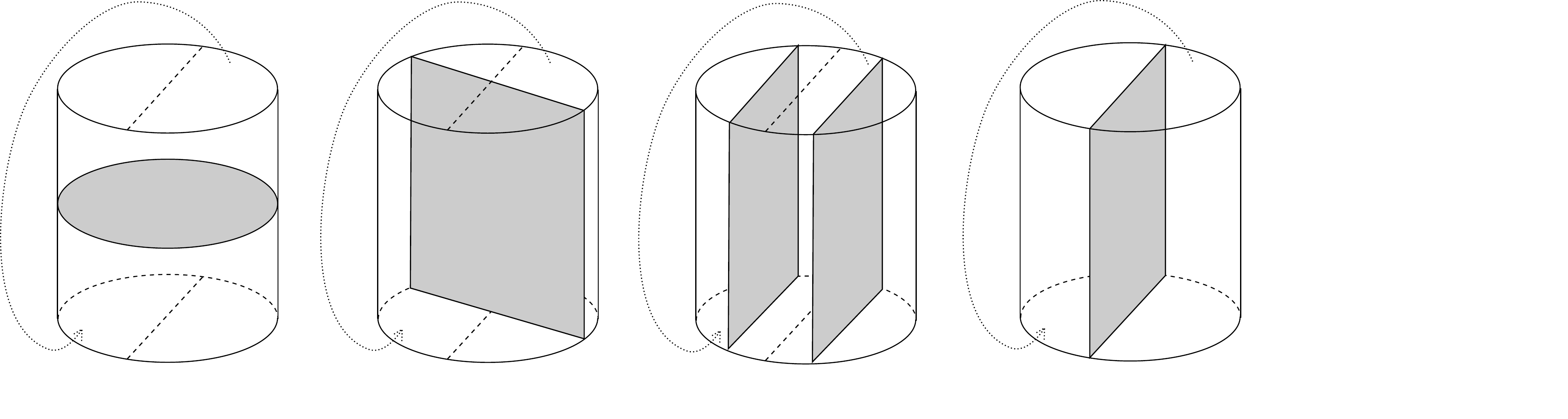
\caption{Incompressible surfaces in a solid Klein bottle represented as $\mathbb{D}^2 \times I$ with the monodromy reflection along a diameter: (i) Horizontal disk with boundary $m$, (ii) Vertical boundary-parallel Mobius strip with boundary $d$, (iii) Vertical boundary-parallel annulus with boundary $d \cup d$, (iv) Vertical one-sided annulus with boundary $l_1 \cup l_2$  and (v) One-sided pair of pants with boundary $l_1 \cup l_2 \cup d$.}\label{fig: IncompressibleKB}
\end{figure}

 \begin{proof} 
Assume that $S$ is connected. If $\del S$ is horizontal, then there exists a horizontal disk $D$ whose boundary is disjoint from $\del S$. And so by Lemma \ref{lem: intersect meridian disk in arcs}, $S$ is isotopic relative boundary to a horizontal disk. (See Figure \ref{fig: IncompressibleKB} (i))

Assume that $\del S$ is vertical in $\del F$. The Klein bottle $\del F$ is the union of two Mobius strips so the boundary of $S$ consists of components that are either the cores $l_1$ or $l_2$ of these Mobius strips or copies of the common boundary $d$ of the Mobius strips. By Lemma \ref{lem: intersect meridian disk in arcs}, $S$ intersects a horizontal disk $D$ in arcs that are not boundary-parallel in $S$.

Let $B = D \times I$ and let $F$ be obtained from $B$ by identifying $D_1=D \times 1$ with $D_0=D \times 0$ via a reflection along the diameter of $D$ joining $l_1$ and $l_2$. We claim that $S'=S\cap B$ is an incompressible surface in $B$. To see this, let $E_0$ be a compressing disk of $S'$ in $B$. After an isotopy we may assume that $E_0$ does not intersect $D_0$, i.e., $E_0$ lies in the interior of $B$. As $S$ is incompressible, so there exists a disk $E_1$ in $S$ with $\del E_1 = \del E_0$. But as $S$ does not intersect $D_0$ in any circles, so interior of $E_1$ is disjoint from $\del B$, i.e. $E_1 \subset S\cap B = S'$, which contradicts the fact that $\del E_0$ is essential in $S'$. Therefore, $S'$ is incompressible in a ball and hence is a disjoint collection of disks.  Note that $S$ can be reconstructed from $S'$ by identifying arcs of $S' \cap D_1$ with their reflections in $S' \cap D_0$. We now analyse the system of arcs in $S' \cap D_1$.

\emph{Case I: $S' \cap D_1$ contains an outermost arc with endpoints on distinct copies of $d$.} Such an outermost arc cuts off a compressing disk $E$ from $D_1$ which satisfies the conditions of Lemma \ref{lem: boundary-parallel annulus}, so $S$ is a boundary-parallel annulus. As $\del S$ is vertical so there exists a boundary-relative isotopy that takes $S$ to a vertical surface. See Figure \ref{fig: IncompressibleKB} (iii).

\emph{Case II: $S' \cap D_1$ contains an outermost arc with both end points on the same curve $d$.} Let $d'$ and $d''$ be the two vertical arcs given by the intersection of $S'$ with $\del D \times I \subset \del B$, so that $d=d' \cup d''$. Let $\alpha_1$ be an arc in $S\cap D_1$ connecting $d'$ and $d''$ and let $\alpha_0$ be its reflection in $S \cap D_0$ which also connects $d'$ and $d''$. Then, $\gamma= \alpha_0 \cup \alpha_1 \cup d' \cup d''$ is a closed curve in $\del S'$. So, $S'$ is a single disk with boundary $\gamma$. But as $\alpha_0$ is identified with $\alpha_1$ via a reflection in $S$ and $S$ is connected, so $S$ is a boundary-parallel Mobius strip with boundary $d$ as in Figure \ref{fig: IncompressibleKB} (ii). Again as $\del S$ is vertical, so there exists a boundary-relative isotopy that takes $S$ to a vertical Mobius strip.

\emph{Case III: $S' \cap D_1$ does not contain any outermost arc with both end points on copies of $d$.}  Any outermost arc of $S' \cap D_1$ must have an end point on either $l_1$ or $l_2$ so there are at most two such arcs. If there is only one outermost arc $\alpha_1$ with end points on both $l_1$ and $l_2$, then its reflection on $D_0$ is an arc $\alpha_0$ that also joins $l_1$ and $l_2$. Arguing as in Case II then, $S'$ is a disk with boundary $\alpha_0 \cup \alpha_1 \cup l_1 \cup l_2$. As $\alpha_0$ is identified with $\alpha_1$ via an orientation-preserving map in $S$ and $S$ is connected, so $S$ is an annulus with boundary components $l_1 \cup l_2$, as in Figure \ref{fig: IncompressibleKB} (iv). The complement of $l_1 \cup l_2$ in $\del F$ is connected so such an annulus is one-sided.

\begin{figure}
\centering
\def\svgwidth{0.6\columnwidth}
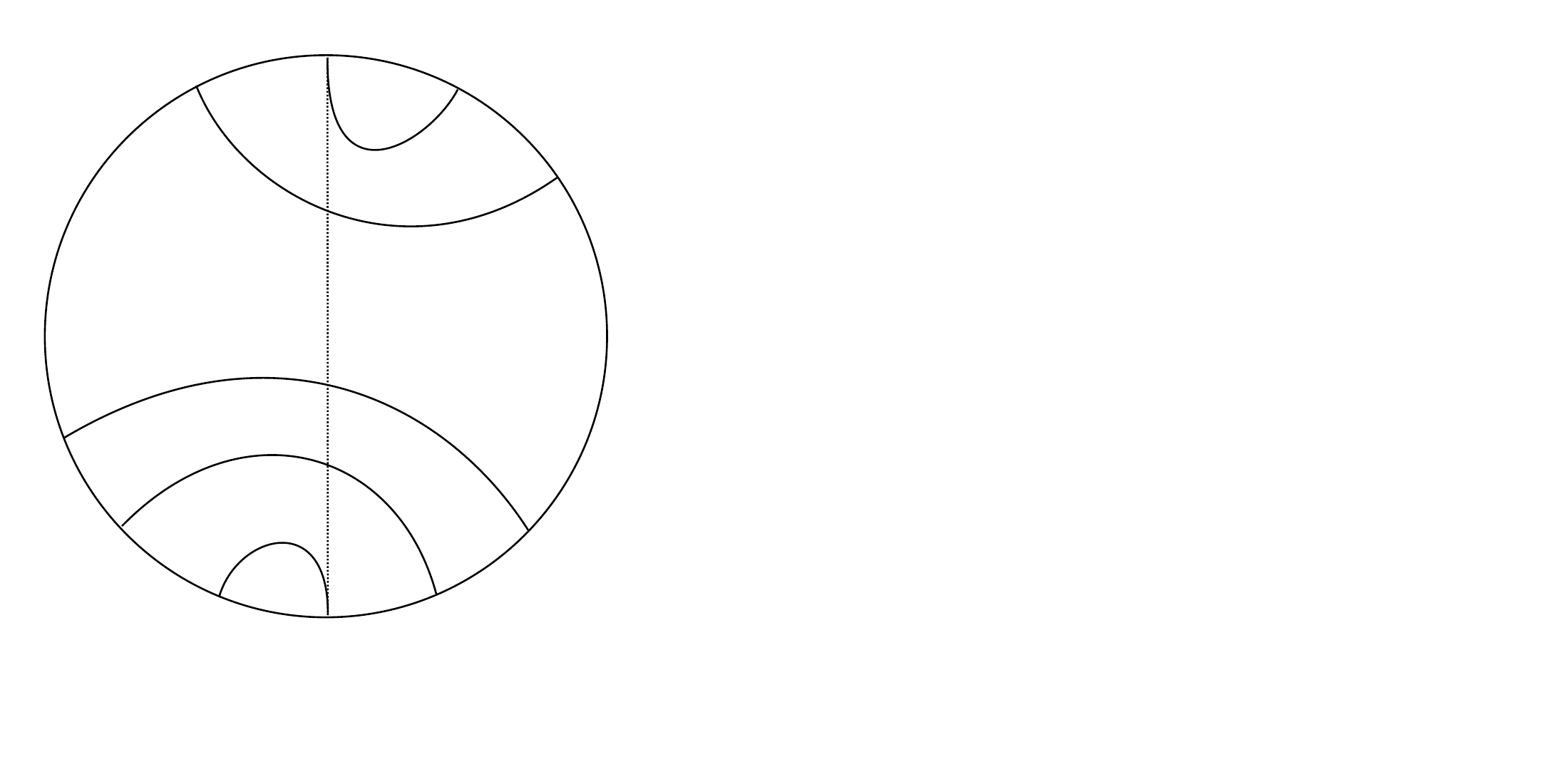
\caption{The pattern of arcs in (i) $U=S\cap D_1$ and its reflection (ii) $L=S \cap D_0$}\label{fig: Arcpattern}
\end{figure}

Assume that there are two outermost arcs in $S' \cap D_1$, one of which has an end point on $l_1$ and the other has an end point on $l_2$. Let $\del S = l_1 \cup l_2 \cup (\cup_{i=1}^k d_i)$ for some $k\geq 1$. Then the pattern of arcs $U$ on $D_1$ is as in Figure \ref{fig: Arcpattern} (i) with outermost arcs from a point in $l_1$ to $d_1$ and $l_2$ to $d_k$, and parallel arcs from points in $d_i$ to $d_{i+1}$. The pattern of arcs $L$ on $D_0$ is a reflection across the diameter $l_1 l_2$ of the pattern $U$ (see Figure \ref{fig: Arcpattern} (ii)). Each boundary component $d_i$ of $S$ splits into two vertical arcs in $S'$, with the vertical arc on the left of the diameter $l_1 l_2$ in $D_1$ denoted by $d'_i$ and the vertical arc on the right by $d''_i$ and ordering of the $d'_i$ and $d''_i$ given via a path on $\del D_1$ from $l_1$ to $l_2$. The boundary of $S'$ alternates between the horizontal arcs in $U$, the vertical arcs $l_1$, $l_2$, $d'_i$ or $d''_i$ and the horizontal arcs in $L$. 

To see that $\del S'$ is connected we trace one such curve starting with $l_1$. If $k$ is odd, then starting from $l_1$, $\del S'$ traces the following vertical arcs in the given order $l_1$, $d'_1$, $d''_2$, $d'_3$, $d''_4$, ..., $d'_k$, $l_2$, $d''_{k}$, $d'_{k-1}$, $d''_{k-2}$, ..., $d''_1$ and back to $l_1$. If $k$ is even, then starting from $l_1$, it traces out the vertical arcs $d'_1$, $d''_2$, $d'_3$, $d''_4$, ..., $d''_k$, $l_2$, $d'_{k}$, $d''_{k-1}$, $d'_{k-2}$, ..., $d''_1$ and back to $l_1$. In either case it runs through all the vertical arcs of $\del S'$ and it is therefore connected. Hence, $S'$ is a single disk.

\begin{figure}
\centering
\def\svgwidth{0.3\columnwidth}
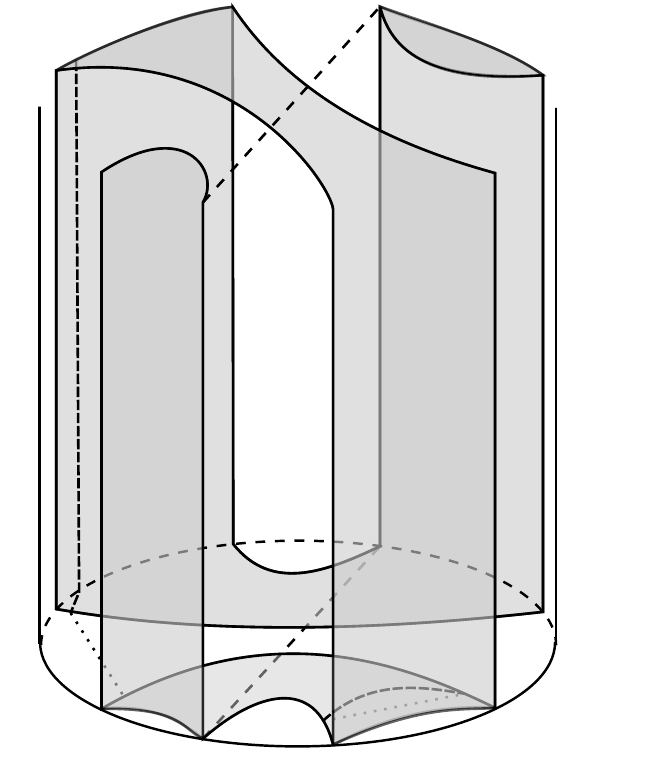
\caption{Shaded disk $S'$ in $B$ containing the vertical arcs of $l_1$, $l_2$, $d_1$, $d_2$ and $d_3$ (i.e., $k=3$). The dashed curve represents the boundary of a compressing disk of $S$.}\label{fig: CompressibleKB}
\end{figure}

Let $E$ be the disk in the sphere $\del B$ bounded by $\del S'$ which contains the disk cut off from $D_1$ by the outermost arc $l_1 d'_1$ (see Figure \ref{fig: CompressibleKB}). Note that $S'$ is a disk in $B$ parallel to $E$. To reconstruct $S$ from $E$ we need to push the interior of $E$ into the interior of $B$ and identify $U$ with $L$, i.e., identify the horizontal arc $l_1 d'_1$ in $U$ with its reflection $l_1 d''_1$ in $L$, the arcs $d''_i d'_{i+1}$ in $U$ with $d'_{i} d''_{i+1}$ in $L$ and the arc $d''_k l_2$ with $d'_k l_2$ if $k$ is odd ($d'_k l_2$ with $d''_k l_2$ if $k$ is even). 

When $k=1$, then $S$ is a pair of pants with boundary $l_1 \cup l_2 \cup d$. To see this, observe that $E$ is an octagon with $\del E$ composed of the arcs $l_1$, $l_1 d'$ in $U$, $d'$, $d' l_2$ in $L$, $l_2$, $l_2 d''$ in $U$, $d''$ and $d'' l_1$ in $L$ (see Figure \ref{fig: IncompressibleKB} (v)). Identifying the horizontal arcs $l_1 d'$ with $l_1 d''$ and $d' l_2$ with $l_2 d''$ gives a pair of pants. Any simple closed curve in a pair of pants is isotopic to one of its boundary components. But as $l_1$, $l_2$ and $d$ (the boundary components of $S$) are all fibers and therefore non-trivial in $F$, so $S$ does not have any compressing disk in $F$.

We shall show that if $k>1$, then $S$ does in fact have a compressing disk which contradicts the incompressibility of $S$. As before, let $E$ be the disk in $\del B$ parallel to $S'$ (see Figure \ref{fig: CompressibleKB}). Let $G$ be the disk in $\del D \times I$ cut off by $d'_1 \cup d'_2$ and $\del D \times \{0, 1\}$. The boundary of $G$ consists of the vertical arcs $d'_1$, $d'_2$ and the horizontal arcs $\beta_i$ in $\del D \times \{i\}$ connecting $d'_1$ and $d'_2$ (which do not pass through $l_i$). Observe that the arcs $\gamma_i$ joining $d''_1$ and $d''_2$ in $\del D_i$ (which do not pass through $l_1$ and $l_2$) lie in $E$. As $S'$ is parallel to $E$, we may assume that $\gamma_1$ and $\gamma_2$ lie in $S'$. After identifying $D_1$ with $D_0$ in $F$, $\gamma_i$ is identified with $\beta_i$ and so $G \cap S = \del G$. To see that $\del G$ is essential in $S$ observe that it intersects the horizontal arc $l_1 d'_1$ in $U$ exactly once. As it intersects a properly embedded arc of $S$ exactly once, so it can not bound a disk in $S$.
\newline

Suppose that $S$ is not connected. We have shown that each component of $S$ is isotopic to one of the five possibilities listed in the statement of this Lemma. We now argue that a single isotopy can be used to simultaneously make horizontal or vertical all the components of $S$ which are not a pair of pants.

If any component of $S$ has horizontal boundary, then it intersects any vertical curve. So if one component of $S$ is isotopic to a horizontal disk then $S$ is a union of disks that can be isotoped to be horizontal. Hence, there exists a single isotopy of $F$ that makes all components of $S$ horizontal. 

Assume that no component of $S$ has horizontal boundary. If a component $C$ of $S$ is a boundary-parallel annulus with boundary two copies of $d$, then its complement in $F$ is a solid Klein bottle containing $l_1$ and $l_2$ and a solid torus $T$. The only components of $S$ that can lie in $T$ are other annuli parallel to $C$. Assume that $C$ is the innermost such annuli. There is an isotopy defined in a neighbourhood of $T$ which takes $C$ into $\del F$. Push the interiors of these annuli back into the interior of $F$ to get an isotopy that fixes all other components of $S$ and takes all the annuli in $T$ to vertical surfaces. Repeating this process for all boundary-parallel innermost annuli we get an isotopy fixing $\del F$ and taking all boundary-parallel annuli to vertical surfaces. 

We now take the solid Klein bottle $F'$ in the complement of these boundary-parallel annuli. And let $C$ be a component of $S$ that is a boundary-parallel Mobius strip in $M'$. Such a component intersects any surface whose boundary contains $l_1$ and $l_2$, so the only remaining components of $S$ in $F'$ are similar boundary parallel Mobius strips. A single isotopy of $F'$ exists that takes all such components to vertical surfaces while fixing the boundary of $F'$. 

As the one-sided annulus and one-sided pair of pants have intersecting boundaries, so there can be at most one such component. If a component $C$ of $S$ in $F'$ is a one-sided annulus with vertical boundary, then there is an isotopy of $F'$ fixing the boundary which takes $C$ to a vertical surface. Combining all these isotopies gives an isotopy which takes $S$ to a vertical surface.
 \end{proof}
 
\begin{corollary}\label{cor: incompressible in Mobius fibered}
Let $M$ be the fibered manifold $N \times S^1$ or $N \tilde{\times} S^1$. Let $S$ be a connected incompressible surface in $M$. Assume that for some $t_0 \in S^1$, $S \cap(N \times t_0)$ is horizontal or vertical and that $\del S$ is either horizontal or vertical in $\del M$. Then $S$ can be isotoped fixing $\del S$ to be either horizontal or vertical or a one-sided once-punctured torus with vertical boundary when $M=N \times S^1$ or a one-sided once-punctured Klein bottle with vertical boundary when $M=N \tilde{\times} S^1$.
\end{corollary}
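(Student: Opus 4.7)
The plan is to cut $M$ along the fibered Mobius strip $N \times t_0$ so as to reduce to the solid Klein bottle setting of Theorem \ref{thm: incompressible in solid Klein bottle}, and then analyze the surface recovered after re-gluing. After isotoping $S$ rel $\del S$ to minimize $|S \cap (N \times t_0)|$ in its isotopy class (which preserves the hypothesis that this intersection is horizontal or vertical), I cut $M$ along $N \times t_0$ to obtain a fibered solid Klein bottle $F = N \times I$. The boundary $\del F$ is a Klein bottle consisting of two Mobius strips $N \times 0$, $N \times 1$ (the two sides of the cut, with cores $l_1$, $l_2$) together with the annulus $\del N \times I$ (the cut-open $\del M$).

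Let $S' = S \cap F$. I would first verify that $S'$ is incompressible in $F$ and that each component of $\del S'$ is horizontal or vertical in $\del F$. The boundary assertion follows from the hypotheses: on $\del N \times I$, $\del S'$ is part of $\del S$ (horizontal or vertical in $\del M$), while on $N \times 0 \cup N \times 1$, $\del S'$ is a doubled-up copy of $S \cap (N \times t_0)$. For incompressibility, a compressing disk for $S'$ pushes slightly off the cut surface into $M$, where either it compresses $S$ (contradicting incompressibility of $S$) or its boundary bounds a disk in $S$ that, by irreducibility of $M$, can be isotoped across to reduce $|S \cap (N \times t_0)|$ (contradicting minimality). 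Applying Theorem \ref{thm: incompressible in solid Klein bottle} componentwise then shows that every component of $S'$ is isotopic rel boundary to one of the five types (i)--(v) listed there.

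Finally, I would re-glue $N \times 0$ to $N \times 1$ (by the identity for $M = N \times S^1$, by the fixed-point-free involution for $M = N \tilde{\times} S^1$) and track the resulting $S$. Since $l_1$, $l_2$ each appear as a single curve on $\del F$, at most one component of $S'$ can have $l_i$ in its boundary, so types (iv) and (v) cannot coexist. If every component is of type (i), re-gluing produces a horizontal $S$; if every component is of types (ii)--(iv), re-gluing produces a vertical $S$. The substantive case is when some component $P$ of $S'$ is a one-sided pair of pants (type (v)); since $\del P$ already uses both $l_1$ and $l_2$, any other component of $S'$ has boundary only on $\del N \times I$ and is disjoint from $P$ in $F$, so connectedness of $S$ forces $S' = P$. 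Gluing $l_1$ to $l_2$ in $P$ then yields a surface with $\chi = -1$ and single boundary circle $d$ (vertical in $\del M$): the identity gluing gives an orientable once-punctured torus when $M = N \times S^1$, and the orientation-reversing gluing gives a once-punctured Klein bottle when $M = N \tilde{\times} S^1$; one-sidedness of $S$ in $M$ is inherited from the one-sidedness of $P$ in $F$. The main technical obstacle will be carefully handling how the boundary of $S'$ distributes between the annular and Mobius pieces of $\del F$ (so that the hypotheses of Theorem \ref{thm: incompressible in solid Klein bottle} are met componentwise) and verifying the orientation/sidedness statements in the pair-of-pants gluing.
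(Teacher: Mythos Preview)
Your overall strategy---cut along $N \times t_0$, apply Theorem~\ref{thm: incompressible in solid Klein bottle}, re-glue---is exactly what the paper does, and your treatment of the horizontal and purely vertical cases is fine. The gap is in the pair-of-pants case. You claim that once a component $P$ of $S'$ is of type (v), every other component of $S'$ has boundary only on the annulus $\del N \times I$, and hence connectedness of $S$ forces $S'=P$. This is not justified: the curves $l_1,l_2$ are only the \emph{cores} of the Mobius strips $N_0,N_1$, but the remaining (non-core) fibers of $N_0$ and $N_1$ are $d$-type curves. In particular, the third boundary component $d$ of $P$ itself may lie on $N_0$ or $N_1$ rather than on $\del M$, and components of types (ii) or (iii) may likewise have their $d$-boundaries on $N_0\cup N_1$. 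After re-gluing $N_0$ to $N_1$ such components can become attached to $P$, so $S'\neq P$ in general.

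The paper handles this as follows: if the $d$-boundary of $P$ lies on $N$, the component of $S'$ glued to it there cannot be a Mobius strip of type (ii) (such a strip separates $l_1$ from $l_2$ in $\del K$ and would therefore meet $P$), so it is a vertical annulus of type (iii); following the chain of such annuli one eventually reaches a $d$-curve on $\del M$, and the glued-up surface is still a once-punctured torus or Klein bottle. Your conclusion is correct, but this chain-of-annuli argument is missing. A smaller point: asserting that minimizing $|S\cap(N\times t_0)|$ ``preserves the hypothesis'' that the intersection is horizontal or vertical needs justification; the paper sidesteps this by using the hypothesis directly (a disk $D_0\subset S$ with $\del D_0$ essential in $S'$ would force closed curves in $S\cap N$, which horizontal or vertical intersection forbids).
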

\begin{proof}
Let $N$ be a fibered Mobius strip $I \times I/ (x, 0) \sim (1-x, 1)$ and let $r$ denote the reflection of $N$ along the arc $I \times \frac{1}{2}$. The manifold $M$ can be obtained from $K=N \times I$ by identifying $N_1 = N \times 1$ with $N_0 = N \times 0$ via the identity if $M = N \times S^1$, or via the reflection $r$ if $M = N \tilde{\times} S^1$. In either case, the fibration on $\del M$ is induced by the circles $\del N \times t$. We may assume that $S$ is either disjoint from or transversely intersects the mobius strip $N \times t_0$, which we identify with $N=N_0=N_1$.

Let $S'= S \cap K$. Assume that $S'$ has a compressing disk $D$ in $K$. As $S$ is incompressible, so there exists a disk $D_0 \subset S$ with $\del D_0 = \del D \subset int(K)$. As $\del D$ is non-trivial in $S'$, so $D_0$ intersects $N$ in some closed curves. This contradicts the fact that $S \cap N$ is horizontal or vertical. Therefore $S'$ is incompressible in $K$.

The fibration of the Klein bottle $\del K$ (given by the fibration of $\del M$ and $N$) is a union of curves parallel to $d$, the curve $l_1$ and the curve $l_2$. By assumption, $\del S'$ is either horizontal or vertical in $\del K$. Applying Theorem \ref{thm: incompressible in solid Klein bottle} to $S'$, we know that each component of $S'$ is either a horizontal disk (meridian disk of the solid Klein bottle $K$), vertical annulus, vertical Mobius strip or a pair of pants with boundary curves $l_1 \cup l_2 \cup d$. 

A horizontal disk of $K$ intersects any surface in $K$ which has vertical boundary curves, so if a component of $S'$ is horizontal then $S'$ is a collection of horizontal disks attached in pairs along arcs on their boundaries. As it is a covering space of an annulus which is the base space of $M$ so $S'$ must be a horizontal annulus.

If $S'$ is a collection of vertical annuli or Mobius strips then $S$ is obtained from $S'$ by attaching them along their boundaries. So $S$ is a vertical annulus, Mobius strip, torus or Klein bottle. 

At most one component of $S'$ is a pair of pants. Assume that some component $C$ of $S'$ is a pair of pants. The boundary components $l_1$ and $l_2$ of $C$ are identified in $S$ when $N_1$ is stuck to $N_0$ via the identity or via the reflection map $r$. If the third boundary component $d$ of $C$ lies on $\del M$, then $S$ is obtained from a pair of pants by identifying two of the boundary curves $l_1$ and $l_2$ via identity if $M=N \times S^1$ and reflection if $M=N \tilde{\times} S^1$. $S$ is therefore a one-sided punctured torus with vertical boundary when $M= N\times S^1$ and a one-sided punctured Klein bottle with vertical boundary when $M=N \tilde{\times} S^1$.

Assume that the third boundary component $d$ of $C$ lies on $N$. A boundary parallel Mobius strip in $K$ separates $l_1$ and $l_2$ and therefore must intersect $C$. So by Theorem \ref{thm: incompressible in solid Klein bottle}, the component of $S'$ that meets the pair of pants  along the curve $d$ on $N$ must be a vertical annulus with boundary two copies of $d$.  If both these boundary components lie on $N$, then by the same reasoning it is adjacent to another vertical annulus. Repeating this argument finitely many times, we get a vertical annulus with one boundary curve $d$ attached to the boundary of the pair of pants and the other boundary curve a fiber of $\del M$. $S$ is therefore obtained from a pair of pants by identifying two of the boundary curves $l_1$ and $l_2$ via identity if $M=N \times S^1$ and reflection if $M=N \tilde{\times} S^1$ and by identifying the third boundary curve to a boundary component of a vertical annulus. And so again, $S$ is a one-sided punctured torus with vertical boundary when $M=N \times S^1$ and a one-sided punctured Klein bottle with vertical boundary when $M= N \tilde{\times} S^1$.
\end{proof}

We now prove Theorem \ref{thm: main theorem} by replicating the proof of Theorem 4.1 of Rannard \cite{Ran}, with modifications to take into account singular surfaces and boundary components.

\begin{proof}
As $M$ contains an essential surface so it is not $S^2 \times S^1$, $S^2 \tilde{\times} S^1$ or $\RP^3 \# \RP^3$. So $M$ is a solid torus, solid Klein bottle or an irreducible manifold with incompressible boundary. Isotope $S$ to have minimal complexity in its isotopy class. After a further isotopy of $S$ near each $E \in \mathcal{E}$ using Lemma \ref{lem: well-embedded}, we may assume that it is well-embedded. Let $M_1$ denote the disjoint union of model neighbourhoods of the singular surfaces and let $M_0 = \overline{M \setminus M_1}$. If $M_0$ is empty, then $M$ is a either $N \times I$, $N\times S^1$ or $N \tilde{\times} S^1$ and so by Corollary \ref{cor: incompressible in Mobius fibered}, $S$ can be isotoped to be pseudo-horizontal or psuedo-vertical. Assume that $M_0$ is non-empty.

When $M_1$ is non-empty, let $P=\del M_0 \cap \del M_1$ be a properly embedded surface that is a union of some $E \in \mathcal{E}$. So by Lemma \ref{lem: incompressible pieces}, $S \cap M_0$ is an incompressible surface in $M_0$. 

By Theorem \ref{thm: incompressible in solid torus}, for each solid torus $F \in \mathcal{F}$, there exists an isotopy pointwise fixing $\del F$ and taking $S \cap F$ to either a union of vertical annuli, a union of horizontal disks or a once-punctured non-orientable surface. Combining these isotopies we get an isotopy of $S$ that pointwise fixes $\mathcal{E}$ and takes $S$ to a well-embedded surface that intersects each solid torus $F \in \mathcal{F}$ in vertical annuli, horizontal disks or a once-punctured non-orientable surface.

\emph{Case I: For some solid torus $F \in \mathcal{F}$, a component of $S \cap F$ is a boundary-parallel vertical annulus.}

As $S$ is well-embedded, it intersects $\del F$ in a union of fibers. Suppose there exists a solid torus $F'$ that is adjacent to $F$ along some $E \in \mathcal{E}$ that intersects $S$. If $F'$ is a regular solid torus, then by Theorem \ref{thm: incompressible in solid torus} and the fact that fibers are longitudes of regular solid tori, $S\cap F'$ must also be a union of vertical annuli. If $F'$ is a non-regular solid torus then as slopes at singular fibers can not be infinite so fibers can not be the boundary of a meridian disk. Therefore, $S\cap F'$ is either a union of vertical annuli or a once-punctured non-orientable surface. So $S \cap (F \cup F')$ is a pseudo-vertical surface. Repeating this argument for adjacent solid tori we can conclude by induction that $S\cap M_0$ is a pseudo-vertical surface with vertical boundary in $\del M_0$ (as non-regular solid tori do not intersect $\del M_0$).

\emph{Case II: For all solid tori $F \in \mathcal{F}$, $S \cap F$ consists of horizontal disks and once-punctured non-orientable surfaces.}

Fix a solid torus $F_0 \in \mathcal{F}$. Suppose that for some regular solid torus $F \in \mathcal{F}$, $S\cap F$ is a punctured non-orientable surface. There is a path of regular solid tori from $F$ to $F_0$. We will use Lemma \ref{lem: reduce to meridian disks} repeatedly to reduce the intersection of $S$ with each solid tori in this path (except possibly $F_0$) to meridian disks.

Let $F' \in \mathcal{F}$ be a regular solid torus adjacent to $F$ along some $E \in \mathcal{E}$. By Lemma \ref{lem: reduce to meridian disks}, we may compress $S$ along $E$ finitely many times to reduce $S\cap F$ to a meridian disk. These compressions give local isotopies that change $S$ only in the interior of $F \cup F'$. As these isotopies fix $S \cap \mathcal{V}$, so the complexity $\xi(S)$ does not change, and so $S$ is still of minimal complexity. By Lemma \ref{lem: well-embedded}, after an isotopy near $E$ in $F \cup F'$, $S$ is a well-embedded surface. Repeat this process along a path of regular solid tori from $F$ to $F_0$. Eventually the surface $S$ is isotoped to a well-embedded surface that intersects each regular solid torus in meridian disks. As $S$ intersects $\del F_0$ horizontally so by Theorem \ref{thm: incompressible in solid torus}, $S \cap F_0$ must be a non-orientable surface or a union of meridian disk. 

If $M$ has an isolated singular fiber, then take $F_0$ to be a non-regular solid torus containing such a fiber, so that $S\cap M_0$ becomes a pseudo-horizontal surface in $M_0$ with horizontal boundary in $\del M_0$. If $M$ has no isolated singular fibers then by assumption it must have singular surfaces, i.e, $M_1$ is non-empty. Take $F_0$ to be a regular solid torus that intersects $\del M_1$ along some $E_0 \in \mathcal{E}$. Assume that $S\cap F_0$ is a once-punctured non-orientable surface. By Lemma \ref{lem: reduce to meridian disks}, repeated boundary compressions of $S\cap F_0$ along $E_0$ reduces $S \cap F_0$ to a meridian disk and such an isotopy does not change $\xi(S)$. Again the surface $S\cap M_0$ has been isotoped to a pseudo-horizontal surface in $M_0$ with horizontal boundary in $\del M_0$.
\newline

If $M_1$ is empty, then we have shown that $S$ is a pseudo-horizontal or pseudo-vertical surface. When $M_1$ is non-empty, take $P = \del M_0 \cap \del M_1$ and by Lemma \ref{lem: incompressible pieces},  $S\cap M_1$ is an incompressible surface. By Lemma \ref{lem: well-embedded}, there is an isotopy in a neighbourhood of $E \in \mathcal{E}$ which makes $S$ a well-embedded surface.  By Theorem \ref{thm: incompressible in solid Klein bottle} and Corollary \ref{cor: incompressible in Mobius fibered}, when $S \cap P$ is vertical as in Case I, for each component  $W$ of $M_1$, $S \cap W$ is either vertical, a pair of pants (when $W=N \times I$), a once-punctured torus (when $W=N \times S^1$) or a once-punctured Klein bottle (when $W=N \tilde{\times} S^1$). Therefore, $S$ is a pseudo-vertical surface in $M$.
And when $S \cap P$ is horizontal as in Case II, then $S\cap M_1$ must be a horizontal surface. And therefore, $S$ is a pseudo-horizontal surface in $M$. 
\end{proof}

\bibliographystyle{alpha}
\bibliography{IncompressibleSFS}

\end{document}